\numberwithin{equation}{section}
\newtheorem{thm}{Theorem}[section]
\newtheorem{rmk}{Remark}[section]
\newtheorem{scm}{Scheme}[section]
\newtheorem{lem}{Lemma}[section]
\def\sech{\mathrm{sech}}
\begin{document}

\begin{frontmatter}

\title{Arbitrary high-order structure-preserving methods for the quantum Zakharov system}

\author[YNU]{Gengen Zhang}
\ead{zhanggen036@163.com}
\author[YNUFE]{Chaolong Jiang\corref{cor1}}
\cortext[cor1]{Corresponding author}
\ead{Chaolong$\_$jiang@126.com}
\address[YNU]{School of Mathematics and Statistics, Yunnan University, Kunming 650504,  China}
\address[YNUFE]{School of Statistics and Mathematics,
Yunnan University of Finance and Economics, Kunming 650221, China}
	\begin{abstract}
In this paper, we present a new methodology to develop arbitrary high-order structure-preserving methods for solving the quantum Zakharov system. The key ingredients of our method are: (i) the original Hamiltonian energy is reformulated into a quadratic form by introducing a new quadratic auxiliary variable; (ii) based on the energy variational principle, the original system is then rewritten into a new equivalent system which inherits the mass conservation law and a quadratic energy; (iii) the resulting system is discretized by symplectic Runge-Kutta method in time combining with the Fourier pseudo-spectral method in space. The proposed method achieves arbitrary high-order accurate in time and can preserve the discrete mass and original Hamiltonian energy exactly. Moreover, an efficient iterative solver is presented to solve the resulting discrete nonlinear equations. Finally, ample numerical examples are presented to demonstrate the theoretical claims and illustrate the efficiency of our methods.
	\end{abstract}
	
	\begin{keyword}
        Quantum Zakharov system;
		symplectic Runge-Kutta method;
		structure-preserving method;
		Fourier pseudo-spectral method.
	\end{keyword}
	
\end{frontmatter}

\section{Introduction}

The quantum Zakharov system \cite{Garcia05,Haas09} is widely used to describe
 the nonlinear interaction between the Langmuir waves and the ion-acoustic waves. In this paper, we consider the following quantum Zakharov system (QZS):
\begin{equation}\label{eq1.1}
\left\{
\begin{split}
& {\rm i} \partial_tE({\bf x},t)+\Delta E({\bf x},t)- \varepsilon^2 \Delta^2 E({\bf x},t)=N({\bf x},t) E({\bf x},t), \\
&\partial_{tt}N({\bf x},t)-\Delta N({\bf x},t)+\varepsilon^2 \Delta^2 N({\bf x},t) =\Delta|E({\bf x},t)|^2, \ {\bf x} \in \Omega,\ 0<t\le T,\\
& E ({\bf x},0) = E_0({\bf x}), \  N ({\bf x},0) = N_0({\bf x}),  \ N_t({\bf x},0) = N_1({\bf x}),\ {\bf x}\in\bar{\Omega}\subset\mathbb{R}^d,\ d=1,2,
 \end{split}\right.
\end{equation}
where $\text{i}=\sqrt{-1}$ is the complex unit, $t$ is the time variable, ${\bf x}$ is the spatial variable, the complex-valued function $E:=E({\bf x},t)$ denotes the slowly varying envelope of the rapidly oscillatory electric field, the real-valued function $N:=N({\bf x},t)$ represents the low-frequency variation of the density of the ions, $\Delta$ is the usual Laplace operator, the quantum effect $\varepsilon >0 $ is the ratio of the ion plasma
and the temperature of electrons, and $E_{0}(\mathbf{x})$, $N_{0}(\mathbf{x})$ and $N_{1}(\mathbf{x})$ are given initial conditions, and $N_{1}(\mathbf{x})$ satisfies the following compatibility condition \cite{CaiYongyong22ANM,Glassey92}:
\begin{equation}\label{N1compatibility}
\int_{\Omega} N_{1}(\mathbf{x}) d \mathbf{x} =0.
\end{equation}
In the special case $\varepsilon=0$, it reduces
to the classical Zakharov system (ZS), which
have been widely applied to various physical problems, such as plasmas \cite{GuoBoling16,Zakharov72},
hydrodynamics \cite{Degtyarev74}, the
theory of molecular chains \cite{Davydov79} and so on.
 When either the electrons temperature is low or the ion-plasma frequency is high, the quantum effect  can be characterized by the fourth-order perturbation with a quantum parameter $\varepsilon$. For more details, please refer to Refs. \cite{Haas11,Marklund05,Misra09}.
With the periodic boundary condition, the QZS \eqref{eq1.1} conserves the mass
\begin{equation}\label{eq1.2-mass}
\begin{split}
& \mathcal{M}(t)=\int_{\Omega} |E|^2 d{\bf x}\equiv\mathcal{M}(0),\ t\ge 0,
 \end{split}
\end{equation}
and the energy
\begin{equation}\label{eq1.2-energy}
\begin{split}
&\mathcal{H}(t)=\int_{\Omega} \left( \left|\nabla  E \right|^{2} +\frac{1}{2}( |\nabla v |^{2} + N^{2}) + \varepsilon^{2}\left| \triangle E \right|^{2}
 +\frac{\varepsilon^{2}}{2}\left|\nabla N  \right|^{2} + N|E|^{2} \right) d{\bf x}\equiv\mathcal{H}(0), \ t\ge 0,
 \end{split}
\end{equation}
where $\triangle v  = N_{t}$.

Extensive mathematical and numerical studies have been carried out for the
above QZS \eqref{eq1.1} in the literature. Along the mathematical front,
Fang et al. \cite{FangYungFu19} showed that the QZS \eqref{eq1.1} is locally well-posed in $L^2(\mathbb{R}^d)$ data
for dimension up to eight, together with global existence for dimensions up to five,
which is different from the classical ZS where the global and local well-posedness of the Cauchy problem is known only for $1\le d\le 3$ \cite{FangYungFu16,Glang}. The blow-up in finite time of the solution for high-dimensional classical ZS was investigated in \cite{Masselin01}.
Along the numerical front, the numerical studies of  the classical or generalized ZS are very rich, such as
time splitting  methods \cite{BaoSun05, BaoSu18, JinS04,JinS06}, scalar auxiliary variable  approach  \cite{Shen2021ZS}, finite difference methods \cite{BaoSu17,CaiYY18, Glassey92,XiaoAiguo19ANM}, discontinuous Galerkin method \cite{XiaShu10}, etc. Recently, there has been growing interest in developing accurate and efficient numerical methods for the QZS \eqref{eq1.1}. Baumstark and Schratz \cite{Baumstark21} developed a new class of asymptotic preserving trigonometric integrators for the QZS \eqref{eq1.1}. Meanwhile, it is shown rigorously in mathematics that the scheme converges to the classical ZS in the limit $\varepsilon\rightarrow 0 $ uniformly in the time discretization parameter. Zhang \cite{ZhangG21} proposed an explicit mass-conserving time-splitting exponential wave integrator Fourier pseudo-spectral (TS-EWI-FP) method for the QZS \eqref{eq1.1}. However, these mentioned schemes cannot conserve the energy \eqref{eq1.2-energy} of the QZS \eqref{eq1.1}. It has been shown that in the numerical simulation of the collision of solitons, the solution of mass- and energy-conserving schemes cannot produce nonlinear blow-up \cite{SV1986IMA,ZFPV95}. Later on, Zhang and Su \cite{ZhangSu21} proposed and analyzed a linearly-implicit conservative compact
finite difference method for the QZS \eqref{eq1.1}. Cai et al. \cite{CaiYongyong22ANM}  proposed a novel of mass- and energy-conserving compact finite difference scheme for the
 QZS \eqref{eq1.1}. However it is shown rigorously in mathematics that these existing mass- and energy-conserving schemes are only second-order accurate in time. In \cite{CWJ2021cpc,GZJsiam2020,JCQSjsc2022}, it is clear to observe that high-order accurate structure-preserving scheme will provide much smaller numerical error and more robust than the second-order accurate one as the large time step is chosen. Thus, it is desirable to propose high-order accurate mass- and energy-preserving methods for solving the QZS \eqref{eq1.1}.

 As a matter of the fact, in the past few decades, how to devise high-order accurate energy-preserving schemes for conservative systems have attracted plenty of attention. The excellent ones include Hamiltonian Boundary Value Methods (HBVMs) \cite{BI16,BIT10}, energy-preserving variant of collocation methods \cite{CH11bit,H10}, high-order averaged vector field (AVF) methods \cite{LwQ14,QM08,MHW2021jcp}, functionally
fitted energy-preserving methods \cite{MB16,LWsina16,WW18} and energy-preserving continuous stage Runge-Kutta (RK) methods \cite{MB16,TS12}, etc.  These methods can be easily extended to propose high-order accurate energy-preserving scheme for the QZS \eqref{eq1.1}, which however cannot preserve the mass exactly \cite{BBCIamc18,LW15}. Based on the basic principle of the structure-preserving method where the numerical method should preserve the intrinsic properties of the original problems as much as possible, it is valuable to expect that the high-order mass and energy-preserving discretizations for the QZS \eqref{eq1.1} will produce richer information on the continuous system.
Very recently, inspired by the ideas of the invariant energy quadratization (IEQ) approach \cite{YZW17}, a new class of high-order accurate energy-preserving
methods are proposed in \cite{GongQuezheng21,GHWW2022}, which was generalized more recently by Tapley\cite{Tapley-SISC2022}. Especially, the term ``quadratic auxiliary variable (QAV)  approach" was coined by Gong et al. in \cite{GongQuezheng21}. The key differences between the IEQ approach and the QAV approach are: (i) the auxiliary variable introduced by the QAV approach shall be quadratic; (ii) as a high-order quadratic invariant-preserving method in time is applied to the equivalent system, the resulting method can not only preserve the original Hamiltonian energy instead of a modified energy \cite{FCW2021anm,JCW19jsc,JWG2020anm}, but also the original quadratic invariants of the equivalent system. Thus, the QAV approach will be an efficient strategy to develop high-order accurate mass- and energy-preserving scheme for the QZS \eqref{eq1.1}, however to our knowledge, there has been no reference considering this issue.

In this paper, motivated by the QAV approach, we first introduce a quadratic auxiliary variable to transform the Hamiltonian energy into a quadratic from, and the original system is then reformulated into a new equivalent system. Finally, a fully-discrete method is obtained by using the RK method in time and the Fourier pseudo-spectral method in space for the reformulated system. We show that when the symplectic RK method is selected, the proposed method can conserve the discrete mass and original Hamiltonian energy exactly. In addition, to solve the discrete nonlinear equations of our method efficiently, an efficient fixed-pointed iteration method is proposed.

The rest of the paper is organized as follows.
In section 2, we first reformulate the original QZS \eqref{eq1.1} into an equivalent form, and the energy conservation law and the mass conservation law of the reformulated system are then investigated. In section 3, a new class of high-order structure-preserving schemes are proposed based on the symplectic RK  method in time and the Fourier pseudo-spectral method in space, respectively.
In Section 4, an efficient implementention for the proposed scheme  is presented.
In Section 5, extensive numerical experiments for the QZS \eqref{eq1.1} are carried out to illustrate
the capability and accuracy of the method, and show some complex dynamical behaviors.
 Finally, some conclusions are drawn in Section 6.

\section{Model reformulation}

Denote $u=(E,N,v)^T $, the QZS \eqref{eq1.1} can be written into the following energy-conserving system
\begin{align}
\begin{split}\label{QSZ-CES}
\partial_t u= S \frac{\delta \mathcal{H}}{\delta \bar{u}},
 \end{split}
 \end{align}
where $\bar{u}$ is the complex conjugate of $u$,
\begin{align*}S=\left(\begin{array}{ccc}
{\rm i} &  0 & 0 \\
0 & 0& 1 \\
0& -1 & 0
\end{array} \right)
\end{align*} is the skew-adjoint operator, and the Hamiltonian energy functional
\begin{equation}\label{Hamilton-energy}
\mathcal{H}[u] = \int_{\Omega} \left( -\left| \nabla E\right|^{2} -\frac{1}{2}\left( |\nabla v|^{2} + N^{2}\right) - \varepsilon^{2}\left| \Delta E\right|^{2}
 -\frac{\varepsilon^{2}}{2}\left|\nabla N\right|^{2} - N|E|^{2} \right) d {\bf x}.
\end{equation}

Next, based on the idea of the QAV approach, we first introduce a quadratic auxiliary variable, as follows:
\begin{equation}\label{auxiliary-variable}
	q:= q({\bf x},t)=|E|^{2},
\end{equation}
the original energy \eqref{Hamilton-energy} is then transformed into the following quadratic form
\begin{equation}\label{modified-energy}
	\mathcal{E}[u,q] =  \int_{\Omega} \left( -\left| \nabla E\right|^{2} -\frac{1}{2}\left( |\nabla v|^{2} + N^{2}\right) - \varepsilon^{2}\left| \Delta E\right|^{2}
 -\frac{\varepsilon^{2}}{2}\left|\nabla N\right|^{2} - Nq \right) d {\bf x}.
\end{equation}

According to the energy variational principle, we rewritten the QZS \eqref{QSZ-CES} into a new equivalent system
\begin{equation}\label{NEWsystem}
	\begin{cases}
 E_t  = {\rm i} ( \Delta E- \varepsilon^2 \Delta^2 E -N E  )  , \\
 N_t = \Delta v,  \\
 v_{t} =   N - \varepsilon^2 \Delta  N + q,  \\
 q_t = 2 {\rm Re} (E_t\cdot \bar{E}),\\
	\end{cases}
\end{equation}
with the consistent initial conditions
\begin{align}\label{CIC}
	&E ({\bf x},0) = E_0({\bf x}), \  N ({\bf x},0) = N_0({\bf x}), \ q({\bf x},0) = \big|E_0({\bf x})\big|^2,
\ N_t({\bf x},0) = N_1({\bf x}), \nonumber\\
& \Delta v({\bf x},0)= N_1({\bf x}),\ \int_{\Omega} v(\mathbf{x},0) d \mathbf{x} =0,\ {\bf x}\in\bar{\Omega},
\end{align}
where ${\rm Re}(\bullet)$ represents the real part of $\bullet$, and we impose $\int_{\Omega} v(\mathbf{x},0) d \mathbf{x} =0$
in the equivalent form  so that $v(\mathbf{x},t)$ is uniquely defined in the second equality of \eqref{NEWsystem}\cite{CaiYongyong22ANM}.

Subsequently, we focus on investigating the structure-preserving properties of the reformulated system \eqref{NEWsystem}-\eqref{CIC}.
\begin{thm}
	Under the periodic boundary conditions, the system \eqref{NEWsystem}-\eqref{CIC} possesses the following conservation laws:
	\begin{itemize}
\item The mass
\begin{align}\label{QAVMCL}
		& \mathcal M(t) \equiv \mathcal M(0),\ t\ge 0,
\end{align}
where $\mathcal{M}(t)$ is defined by \eqref{eq1.2-mass}.
\item The two quadratic invariants
\begin{align}\label{QAVCL}
		& q({\bf x},t)-|E({\bf x},t)|^{2} = q({\bf x},0)-|E({\bf x},0)|^{2}\equiv0,\ {\bf x}\in \Omega,\; t\ge 0, \\\label{QAVECL}
		& \mathcal{E}(t) \equiv \mathcal{E}(0),\ t\ge 0,
	\end{align}
where $\mathcal{E}(t)$  are defined by \eqref{modified-energy}.
\end{itemize}
\end{thm}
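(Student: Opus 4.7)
The plan is to verify the three conservation laws one at a time by direct computation, exploiting the periodic boundary conditions to discard all boundary terms that arise from integration by parts.

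For the mass, I would differentiate $\mathcal{M}(t)=\int_{\Omega}|E|^{2}\,d\mathbf{x}$ under the integral, producing $2\,\mathrm{Re}\int_{\Omega} E_{t}\bar{E}\,d\mathbf{x}$, and substitute the first equation of \eqref{NEWsystem}. Since $\int_{\Omega}\Delta E\cdot\bar{E}\,d\mathbf{x}=-\int_{\Omega}|\nabla E|^{2}\,d\mathbf{x}$ and $\int_{\Omega}\Delta^{2}E\cdot\bar{E}\,d\mathbf{x}=\int_{\Omega}|\Delta E|^{2}\,d\mathbf{x}$ are real, and $\int_{\Omega}N|E|^{2}\,d\mathbf{x}$ is real, the integrand becomes $\mathrm{i}\cdot(\text{real})$, whose real part vanishes. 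That yields \eqref{QAVMCL}.

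For the pointwise quadratic invariant \eqref{QAVCL}, the argument is essentially one line: the fourth equation of \eqref{NEWsystem} gives $q_{t}=2\,\mathrm{Re}(E_{t}\bar{E})=\partial_{t}|E|^{2}$, so $\partial_{t}(q-|E|^{2})\equiv 0$, and the initial condition $q(\mathbf{x},0)=|E_{0}(\mathbf{x})|^{2}$ from \eqref{CIC} closes the argument.

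For the quadratic energy \eqref{QAVECL}, I would differentiate $\mathcal{E}[u,q]$ term by term and regroup. The first key step is to combine the $E$-dependent quadratic pieces:
\begin{equation*}
-\frac{d}{dt}\int_{\Omega}\bigl(|\nabla E|^{2}+\varepsilon^{2}|\Delta E|^{2}\bigr)d\mathbf{x}
=2\,\mathrm{Re}\int_{\Omega}\bigl(\Delta E-\varepsilon^{2}\Delta^{2}E\bigr)\bar{E}_{t}\,d\mathbf{x},
\end{equation*}
and to use the first equation of \eqref{NEWsystem} in the form $\Delta E-\varepsilon^{2}\Delta^{2}E=-\mathrm{i}E_{t}+NE$; the $\mathrm{i}|E_{t}|^{2}$ contribution disappears after taking the real part, leaving $-\int_{\Omega}N\,\partial_{t}|E|^{2}\,d\mathbf{x}=-\int_{\Omega}Nq_{t}\,d\mathbf{x}$ by the fourth equation. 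The $N$- and $v$-dependent quadratic pieces are handled by integrating $\nabla v\cdot\nabla v_{t}$ and $\nabla N\cdot\nabla N_{t}$ by parts and invoking $N_{t}=\Delta v$. Finally, expanding $-\tfrac{d}{dt}\int_{\Omega}Nq\,d\mathbf{x}$ and collecting, all surviving terms factor into
\begin{equation*}
\frac{d\mathcal{E}}{dt}=\int_{\Omega}N_{t}\bigl(v_{t}-N+\varepsilon^{2}\Delta N-q\bigr)d\mathbf{x},
\end{equation*}
which vanishes by the third equation of \eqref{NEWsystem}.

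The main obstacle is purely organizational: keeping track of signs, complex conjugates, and the pairing of which integration-by-parts cancellation consumes which equation of \eqref{NEWsystem}. Once the $E$-terms are collapsed via the Schrödinger-type equation and the $N$- and $v$-terms are collapsed via the wave-type pair, the quadratic auxiliary equation glues the two blocks together and the cancellation is automatic.
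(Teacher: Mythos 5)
Your proposal is correct and follows essentially the same route as the paper: taking the real part of the $E$-equation paired with $\bar E$ for the mass, using the fourth equation together with the consistent initial datum for the pointwise invariant, and differentiating $\mathcal{E}$ term by term so that the result collapses onto the first and third equations of \eqref{NEWsystem} for the energy. The only blemish is a sign slip in one intermediate sentence --- $2\,\mathrm{Re}\int_\Omega NE\,\bar E_t\,d\mathbf{x}$ equals $+\int_\Omega N\,\partial_t|E|^2\,d\mathbf{x}$, not its negative, and this positive sign is exactly what cancels the $-\int_\Omega Nq_t\,d\mathbf{x}$ arising from $-\tfrac{d}{dt}\int_\Omega Nq\,d\mathbf{x}$ --- but your final displayed identity $\tfrac{d\mathcal{E}}{dt}=\int_\Omega N_t\bigl(v_t-N+\varepsilon^2\Delta N-q\bigr)\,d\mathbf{x}$ is the correct one and vanishes as claimed.
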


\begin{proof} We make the inner product of the first equation of \eqref{NEWsystem} with $E$ and then take the real part of the
resulting equation to obtain
\begin{equation}\label{MCL1}
	\frac{\mathrm{d}}{\mathrm{d}t} (|E|^2,1) 
= 2 {\rm Re} \int_{\Omega} {\rm i} \left( -\left| \nabla E\right|^{2} - \varepsilon^{2}\left| \Delta E\right|^{2}
 - N|E|^{2} \right) d{\bf x}  = 0,
\end{equation}
which implies that the system \eqref{NEWsystem} satisfies \eqref{QAVMCL}.

Combining the initial condition $q({\bf x},0)=|E({\bf x},0)|^2$ with the fourth equation of the system \eqref{NEWsystem}, we can deduce
	\begin{equation*}
		\partial_t (q - |E({\bf x},t)|^{2}) = 0,
	\end{equation*}
	which yields \eqref{QAVCL}.
	
Using the periodic boundary condition and the system \eqref{NEWsystem}, we then obtain
	\begin{align*}
		\frac{\mathrm{d}}{\mathrm{d}t}\mathcal{E}(t)
		&= 2{\rm Re}( \Delta E, E_t)-2\varepsilon^{2}{\rm Re} ( \Delta^2 E, E_t)-2{\rm Re}( NE, E_t)
    - (N,N_t)    + \varepsilon^{2} (\Delta N,N_t) -(q,N_t) + (v_t,N_t)  \\
		&= 2{\rm Re} \left( \Delta E - \varepsilon^{2}\Delta^2 E - NE, E_t  \right)
          +  \left(-N + \varepsilon^{2} \Delta N -q + v_t,N_t \right) \\
		&= 2{\rm Re} \left( -{\rm i} E_t, E_t  \right)\\
		&= 0,
	\end{align*}
where $(f,g) = \int_{\Omega} f \bar{g} \mathrm{d} {\bf x}$, for any $ f, g\in L^2(\Omega)$ is denoted as the continuous $L^2$ inner product.
This completes the proof.
\end{proof}

\section{High-order accurate mass- and energy-preserving methods}

In this section, the symplectic RK methods are first used to discretize the  system \eqref{NEWsystem} in time and a class of semi-discrete
RK method is proposed, which can conserve the mass and original Hamiltonian energy of the QZS \eqref{eq1.1}. Subsequently, the Fourier pseudo-spectral method is then employed to discretize the spatial variables of the semi-discrete scheme and a class of fully-discrete high-order mass- and energy-preserving scheme are presented.

\subsection{Time semi-discretization}

Let the time step $\tau=\frac{T}{J}$ and denote $t_n = n \tau$ for $0 \leq n \leq J$.
Let $w^n$ and $w_{ni}$ be the numerical approximations of the function $w({\bf x}, t)$ at $t_n$ and $t_n+c_i\tau$, respectively.
Applying an $s$-stage RK method to discrete the system \eqref{NEWsystem} in time and the following semi-discrete scheme is presented.
\begin{scm} \label{QZS-Scheme-3.1}
Let $b_i,a_{ij}({i,j=1,\cdots,s})$ be real numbers and let $c_i=\sum_{j=1}^sa_{ij}$.
 For given $(E^{n},N^{n}, v^{n}, q^n)$, an $s$-stage RK method is given by
	\begin{equation}\label{QZS-Scheme-eq-3.1}
		\begin{cases}
			E_{ni} =  E^n +  \tau \sum\limits_{j=1}^s a_{ij} k_j^{1},\ k_i^{1} = {\rm i} ( \Delta E_{ni}- \varepsilon^2 \Delta^2 E_{ni} -N_{ni} E_{ni} ), \\
	    	N_{ni} =  N^n +  \tau \sum\limits_{j=1}^s a_{ij} k_j^{2},\ k_i^{2}=  \Delta v_{ni}, \\
			v_{ni} = v^n +  \tau \sum\limits_{j=1}^s a_{ij}  k_j^{3},\ k_i^{3}=   N_{ni} - \varepsilon^2 \Delta  N_{ni} + Q_{ni}, \\
			Q_{ni} = q^n +  \tau \sum\limits_{j=1}^s a_{ij}  k_j^{4},\ k_i^{4}=   2 {\rm Re} ( \bar{E}_{ni}\cdot k_i^{1} ).
		\end{cases}
	\end{equation}
	Then $(E^{n+1},N^{n+1}, v^{n+1}, q^{n+1})$ is updated by
	\begin{align}
		E^{n+1} &=  E^n +  \tau \sum\limits_{i=1}^s b_{i} k_i^{1},\label{TD-E} \\
		N^{n+1}& =  N^n +  \tau \sum\limits_{i=1}^s b_{i} k_i^{2}, \label{TD-N}\\
		v^{n+1} &=  v^n +  \tau \sum\limits_{i=1}^s b_{i} k_i^{3},\label{TD-v} \\
		q^{n+1} &=  q^n +  \tau \sum\limits_{i=1}^s b_{i} k_i^{4}. \label{TD-q}
	\end{align}
\end{scm}

\begin{thm}\label{thm:SD-CL}
If the coefficients of the RK method \eqref{QZS-Scheme-3.1} satisfy
	\begin{equation}\label{RK-symplectic-condition}
		b_i a_{ij} + b_ja_{ji} = b_ib_j,\quad \forall~i,j = 1,\cdots,s,
	\end{equation}
and the periodic boundary condition is considered, the {\bf Scheme \ref{QZS-Scheme-3.1}} preserves the following semi-discrete conservation laws
\begin{itemize}
\item The semi-discrete mass
	\begin{align}\label{QAVRKMCL}
		& \mathcal{M}^{n+1}=\mathcal{M}^n,\ \mathcal{M}^n=\big(|E^{n }|^2,1\big),\ n=0,1,2,\cdots, J.
\end{align}
\item The two semi-discrete quadratic invariants
\begin{align}\label{QAVRKCL}
		&  q^n-|E^{n}|^2=0, \\ \label{QAVRKECL}
		& \mathcal{E}^{n+1} = \mathcal{E}^{n},\ n=0,1,2,\cdots, J,
	\end{align}
	where
\begin{align*}
&\mathcal{E}^n = (\Delta E^n, E^n)-\varepsilon^{2} (\Delta E^n,\Delta E^n) -\frac{1}{2}(N^n,N^n)
     + \frac{\varepsilon^{2}}{2}(\Delta N^n,N^n) -(N^n, q^n) + \frac{1}{2} (\Delta v^n, v^n).
\end{align*}
\end{itemize}
\end{thm}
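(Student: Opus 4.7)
The plan is to regard \eqref{QAVRKMCL}, \eqref{QAVRKCL}, \eqref{QAVRKECL} as the discrete shadows of the three quadratic invariants of the reformulated system \eqref{NEWsystem}, and to prove them by the classical Cooper mechanism: a Runge--Kutta method obeying \eqref{RK-symplectic-condition} preserves every quadratic invariant of its target ODE exactly, through an algebraic identity that relates $u^{n+1}-u^{n}$ to $u^{n}=u_{ni}-\tau\sum_{j}a_{ij}k_{j}$ and cancels the resulting $\tau^{2}$-defect via symmetrization in $(i,j)$. The only non-standard features for \eqref{NEWsystem} are the complex-valued $E$-component and the self-adjointness of $\Delta$ and $\Delta^{2}$ under periodic boundary conditions; both enter only through real-part manipulations and integration by parts.

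For \eqref{QAVRKMCL}, expanding $\mathcal{M}^{n+1}-\mathcal{M}^{n}$ and substituting $E^{n}=E_{ni}-\tau\sum_{j}a_{ij}k_{j}^{1}$ in the cross term reduces the problem to: (a) $\mathrm{Re}(k_{i}^{1},E_{ni})=0$, which holds because $k_{i}^{1}=\mathrm{i}(\Delta E_{ni}-\varepsilon^{2}\Delta^{2}E_{ni}-N_{ni}E_{ni})$ makes $(k_{i}^{1},E_{ni})$ purely imaginary; and (b) the cancellation $-2\sum b_{i}a_{ij}\mathrm{Re}(k_{i}^{1},k_{j}^{1})+\sum b_{i}b_{j}\mathrm{Re}(k_{i}^{1},k_{j}^{1})=0$, which is the symplectic condition after symmetrization. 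For \eqref{QAVRKCL}, I would induct on $n$ starting from $q^{0}=|E^{0}|^{2}$ in \eqref{CIC}; substituting $k_{i}^{4}=\bar{E}_{ni}k_{i}^{1}+E_{ni}\bar{k}_{i}^{1}$ together with $E_{ni}=E^{n}+\tau\sum_{j}a_{ij}k_{j}^{1}$, the same symmetrization trick yields $\tau\sum_{i}b_{i}k_{i}^{4}=|E^{n+1}|^{2}-|E^{n}|^{2}$ pointwise in $\mathbf{x}$, hence $q^{n+1}-|E^{n+1}|^{2}=q^{n}-|E^{n}|^{2}=0$.

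The main obstacle is \eqref{QAVRKECL}, which couples all four components. I would expand $\mathcal{E}^{n+1}-\mathcal{E}^{n}$ term by term using \eqref{TD-E}--\eqref{TD-q}, then split each of the six resulting contributions into a linear-in-slopes part and a quadratic-in-slopes part. After substituting $E^{n}=E_{ni}-\tau\sum_{j}a_{ij}k_{j}^{1}$ and the analogous identities for $N^{n}, v^{n}, q^{n}$, and using $(N_{ni},k_{i}^{4})=2\mathrm{Re}(N_{ni}E_{ni},k_{i}^{1})$, the linear-in-slopes contributions reorganize at the stage level into the discrete counterpart of the continuous energy identity derived in the previous theorem, namely
$\tau\sum_{i}b_{i}\bigl[\,2\mathrm{Re}(\Delta E_{ni}-\varepsilon^{2}\Delta^{2}E_{ni}-N_{ni}E_{ni},k_{i}^{1})+(-N_{ni}+\varepsilon^{2}\Delta N_{ni}-Q_{ni}+k_{i}^{3},k_{i}^{2})+(\Delta v_{ni}-k_{i}^{2},k_{i}^{3})\bigr]$;
each of the three brackets collapses to $0$ once the stage definitions of $k_{i}^{1},k_{i}^{2},k_{i}^{3}$ in \eqref{QZS-Scheme-eq-3.1} are inserted, mirroring the $2\mathrm{Re}(-\mathrm{i}E_{t},E_{t})=0$ step in the continuous proof. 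The quadratic-in-slopes remainders from all six pieces take the uniform shape $\tau^{2}\sum_{i,j}\bigl[\tfrac12 b_{i}b_{j}-\tfrac12(b_{i}a_{ij}+b_{j}a_{ji})\bigr]\langle k_{i}^{\alpha},k_{j}^{\beta}\rangle$ for suitable real-symmetric slope evaluations (involving $\Delta$ or $\Delta^{2}$), and hence vanish by \eqref{RK-symplectic-condition}. Finally, once \eqref{QAVRKCL} is known, $(N^{n},q^{n})=(N^{n},|E^{n}|^{2})$, so $\mathcal{E}^{n}$ coincides with the original Hamiltonian $\mathcal{H}$ after integrating by parts on the gradient terms, and \eqref{QAVRKECL} is exactly preservation of $\mathcal{H}$ as advertised.
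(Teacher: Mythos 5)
Your proposal is correct and follows essentially the same route as the paper: the Cooper-type substitution $E^{n}=E_{ni}-\tau\sum_{j}a_{ij}k_{j}^{1}$ (and its analogues) combined with the condition \eqref{RK-symplectic-condition} to cancel the $\tau^{2}$-defects, the purely imaginary inner product $(k_{i}^{1},E_{ni})$ for the mass, the pointwise identity $\tau\sum_{i}b_{i}k_{i}^{4}=|E^{n+1}|^{2}-|E^{n}|^{2}$ for \eqref{QAVRKCL}, and the stage-level collapse of the linear-in-slopes terms via $2\mathrm{Re}(k_{i}^{1},-\mathrm{i}k_{i}^{1})=0$ and the definitions of $k_{i}^{2},k_{i}^{3}$ for the energy. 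The only cosmetic difference is that the paper folds your third bracket $(\Delta v_{ni}-k_{i}^{2},k_{i}^{3})$ directly into the second via $k_{i}^{2}=\Delta v_{ni}$, which changes nothing of substance.
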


\begin{proof}
	According to Eq. \eqref{TD-E}, we have
 \begin{equation}  \label{u2diff1}
 \begin{array}{lll}
\mathcal{M}^{n+1} = \mathcal{M}^{n} + \tau \sum\limits_{i=1}^sb_{i} \big(k_i^{1},E^{n}\big) +\tau \sum\limits_{i=1}^sb_{i} \big(E^{n},k_i^{1}\big) +\tau^2 \sum\limits_{i,j=1}^s b_{i}b_{j}\big( k_i^{1},k_j^{1}\big).
	\end{array}
\end{equation}
Plugging $E^n = E_{ni} -\tau \sum\limits_{j=1}^s a_{ij} k_j^{1}$ into the right hand side of \eqref{u2diff1} and using \eqref{RK-symplectic-condition}, we have
	\begin{equation}\label{u2diff2}
	\mathcal{M}^{n+1} = \mathcal{M}^{n} +\tau \sum\limits_{i=1}^s b_{i} \big(E_{ni},k_i^{1}\big)
              + \tau \sum\limits_{i=1}^s b_{i} \big(k_i^{1},E_{ni}\big).
	\end{equation}
Furthermore, we can deduce
 \begin{align*}
\tau \sum\limits_{i=1}^s b_{i} \big(E_{ni},k_i^{1}\big) &+ \tau \sum\limits_{i=1}^s b_{i} \big(k_i^{1},E_{ni}\big) \\
&= 2\tau \sum\limits_{i=1}^s b_{i} {\rm Re} (k_i^1, \bar{E}_{ni}) \\
&= 2\tau \sum\limits_{i=1}^s b_{i} {\rm Re} \Big({\rm i} ( \Delta E_{ni}- \varepsilon^2 \Delta^2 E_{ni} -N_{ni} E_{ni} ), \bar{E}_{ni}\Big) \\
&= 2\tau \sum\limits_{i=1}^s b_{i} {\rm Re} \Big({\rm i} \big( -|\nabla E_{ni}|^2 - \varepsilon^2 |\Delta  E_{ni}|^2 -N_{ni} |E_{ni}|^2 \big),1\Big) \\
&=0.
\end{align*}
Then, combining the above equation with \eqref{u2diff2}, we obtain the discrete mass  conservation law  \eqref{QAVRKMCL}.

Based on \eqref{TD-E}, \eqref{RK-symplectic-condition} and $E^n = E_{ni} -\tau \sum\limits_{j=1}^s a_{ij} k_j^{1}$, we get
\begin{align}
 |E^{n+1}|^2 - |E^{n }|^2 &= E^{n+1} \cdot \bar{E}^{n+1}  - E^{n}\cdot \bar{E}^{n}  \nonumber \\
 & =  \tau \sum\limits_{i=1}^sb_{i}  ( k_i^{1} \cdot \bar{E}^{n} )
    +\tau \sum\limits_{i=1}^sb_{i}  ( \overline{k}_i^{1} \cdot E^{n} )
   +\tau^2 \sum\limits_{i,j=1}^s b_{i}b_{j} ( k_i^{1} \cdot \bar{k}_j^{1} ) \nonumber \\
 & =  \tau \sum\limits_{i=1}^sb_{i}  ( k_i^{1} \cdot \bar{E}_{ni} )
    +\tau \sum\limits_{i=1}^sb_{i}  ( \bar{k}_i^{1} \cdot E_{ni} )
   +\tau^2 \sum\limits_{i,j=1}^s (- b_i a_{i j} - b_j a_{j i}+ b_{i}b_{j}  ) ( k_i^{1} \cdot \bar{k}_j^{1} ) \nonumber \\
  & =  \tau \sum\limits_{i=1}^sb_{i}  ( k_i^{1} \cdot \bar{E}_{ni} )
    +\tau \sum\limits_{i=1}^sb_{i}  ( \bar{k}_i^{1} \cdot E_{ni} )      .  \label{u2diff3a}
\end{align}
Noticing that
	\begin{equation}\label{u2diff3}
    \begin{array}{lll}
	 q^{n+1} - q^n   =  \tau \sum\limits_{i=1}^s b_i k_i^4   =  2\tau \sum\limits_{i=1}^s b_i {\rm Re}(k_i^1 \cdot \bar{E}_{ni})
= \tau \sum\limits_{i=1}^sb_{i}  ( k_i^{1} \cdot \bar{E}_{ni} )
    +\tau \sum\limits_{i=1}^sb_{i}  ( \bar{k}_i^{1} \cdot E_{ni} ) .
	\end{array}
    \end{equation}
It follows from \eqref{u2diff3a} and \eqref{u2diff3}  that
\begin{align}
 q^{n+1}- |E^{n+1}|^2 =  q^n - |E^{n }|^2 .
\end{align}
With the help of the initial condition $ q^0  = |E^{0}|^2 $, we can obtain \eqref{QAVRKCL}.
	
Similar to Eq. \eqref{u2diff2}, we can obtain from the \textbf{Scheme \ref{QZS-Scheme-3.1}} that
  \begin{align*}
  & (\Delta E^{n+1}, E^{n+1}) -(\Delta E^n, E^n) = 2\tau \sum\limits_{i=1}^s b_{i} {\rm Re} \big(k_i^{1},\Delta E_{ni}\big),\\
  & (\Delta E^{n+1},\Delta E^{n+1}) -(\Delta E^n,\Delta E^n)= 2\tau \sum\limits_{i=1}^s b_{i} {\rm Re} \big(k_i^{1},\Delta^2 E_{ni}\big),\\
  & (N^{n+1},N^{n+1})-(N^n,N^n)= 2\tau \sum\limits_{i=1}^s b_{i} \big(N_{ni},k_i^{2}\big),\\
   & (\Delta N^{n+1},N^{n+1})-(\Delta N^n,N^n)= 2\tau \sum\limits_{i=1}^s b_{i} \big(\Delta N_{ni},k_i^{2}\big),\\
   & (N^{n+1}, q^{n+1})-(N^n, q^n)= \tau \sum\limits_{i=1}^s b_{i} \left[ \big(k_i^{2},Q_{ni}\big) +\big(N_{ni},k_i^{4}\big)\right],\\
   & (\Delta v^{n+1}, v^{n+1})-(\Delta v^n, v^n)= 2\tau \sum\limits_{i=1}^s b_{i} \big(\Delta v_{ni},k_i^{3}\big).
	\end{align*}
Thus, together with  \eqref{QZS-Scheme-eq-3.1},  we can derive
\begin{align*}
\mathcal{E}^{n+1} - \mathcal{E}^n&= \tau \sum\limits_{i=1}^s b_i \left[ 2{\rm Re} \big(k_i^{1},\Delta E_{ni}\big)
  -2\varepsilon^2 {\rm Re} \big(k_i^{1},\Delta^2 E_{ni}\big) - \big(N_{ni},k_i^{2}\big) \right.\\
&~~~\left. +\varepsilon^2 \big(\Delta N_{ni},k_i^{2}\big) -\big(N_{ni},k_i^{4}\big)
  -\big(k_i^{2},Q_{ni}\big) +\big(\Delta v_{ni},k_i^{3}\big)   \right] \\
&= \tau \sum\limits_{i=1}^s b_i \left[ 2{\rm Re} \big(k_i^{1},\Delta E_{ni} - \varepsilon^2 \Delta^2 E_{ni}- N_{ni}E_{ni} \big)
 -\big(N_{ni}-\varepsilon^2 \Delta N_{ni}+Q_{ni}-k_i^{3}, k_i^{2}\big)   \right] \\
&= \tau \sum\limits_{i=1}^s b_i \left[ 2{\rm Re} \big(k_i^{1},-{\rm i} k_i^{1} \big)
 -\big(N_{ni}-\varepsilon^2 \Delta N_{ni}+Q_{ni}-k_i^{3}, k_i^{2}\big)   \right] \\
&= 0,
\end{align*}
	which implies that the {\bf Scheme \ref{QZS-Scheme-3.1}} satisfies \eqref{QAVRKECL}.
\end{proof}

\begin{thm}\label{thm:original-energy-conservation}
	Under the consistent initial condition \eqref{CIC}, the periodic boundary condition and the condition \eqref{RK-symplectic-condition},
the {\bf Scheme \ref{QZS-Scheme-3.1}} conserves the original Hamiltonian energy at each time step, as follows:
	\begin{equation}\label{QAVRK-OECL}
		\mathcal{H}^{n} \equiv \mathcal{H}^0, \ n=1,2,\cdots,J,
	\end{equation}
	where
\begin{align*}
\mathcal{H}^n = (\Delta E^n, E^n)-\varepsilon^{2} (\Delta E^n,\Delta E^n) -\frac{1}{2}(N^n,N^n)
     + \frac{\varepsilon^{2}}{2}(\Delta N^n,N^n) -(N^n, |E^n|^2) + \frac{1}{2} (\Delta v^n, v^n).
     \end{align*}
\end{thm}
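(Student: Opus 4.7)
The plan is to leverage Theorem 3.1, which has already established that the scheme preserves the modified quadratic energy $\mathcal{E}^n$ and, separately, the pointwise quadratic relation $q^n - |E^n|^2 \equiv 0$. The original Hamiltonian energy $\mathcal{H}^n$ and the modified energy $\mathcal{E}^n$ differ only in the coupling term: $\mathcal{E}^n$ contains $-(N^n, q^n)$, whereas $\mathcal{H}^n$ contains $-(N^n, |E^n|^2)$. So the entire argument reduces to identifying the two energies using the preserved quadratic constraint, and then invoking the conservation of $\mathcal{E}^n$.

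More concretely, the steps I would carry out are as follows. First, subtract the two energy functionals to obtain
\begin{equation*}
\mathcal{H}^n - \mathcal{E}^n = -(N^n, |E^n|^2) + (N^n, q^n) = \bigl(N^n,\, q^n - |E^n|^2\bigr).
\end{equation*}
Second, apply \eqref{QAVRKCL}, which asserts $q^n - |E^n|^2 = 0$ at every time level, to conclude $\mathcal{H}^n = \mathcal{E}^n$ for all $n = 0, 1, \dots, J$. Third, use \eqref{QAVRKECL}, the semi-discrete conservation of $\mathcal{E}^n$, to chain the equalities $\mathcal{H}^n = \mathcal{E}^n = \mathcal{E}^{n-1} = \cdots = \mathcal{E}^0$. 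Finally, the consistent initial condition \eqref{CIC} guarantees $q^0 = |E^0|^2$ at the starting time, so $\mathcal{E}^0 = \mathcal{H}^0$, yielding the desired identity $\mathcal{H}^n \equiv \mathcal{H}^0$.

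There is essentially no genuine obstacle here, since both of the ingredients (preservation of $\mathcal{E}^n$ and of the constraint $q^n = |E^n|^2$) were established under exactly the hypotheses assumed in this theorem, namely the symplectic condition \eqref{RK-symplectic-condition}, the periodic boundary condition, and the consistent initial data \eqref{CIC}. The only conceptual point worth emphasizing in the write-up is that the QAV reformulation has been designed precisely so that the quadratic invariant $q - |E|^2$ is preserved exactly by any RK method whose scheme updates $q$ through the time derivative of $|E|^2$; combining this with the symplectic (quadratic-invariant-preserving) property then transfers conservation from the quadratized energy back to the original non-quadratic Hamiltonian. The proof is therefore essentially a one-line identification modulo the preparatory work in Theorem 3.1.
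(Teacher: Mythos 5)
Your proposal is correct and follows exactly the paper's own argument: invoke Theorem \ref{thm:SD-CL} to get both $q^n=|E^n|^2$ and $\mathcal{E}^{n+1}=\mathcal{E}^n$, then substitute the former into the latter so that $\mathcal{H}^n$ and $\mathcal{E}^n$ coincide and the conservation transfers to the original Hamiltonian. No gaps.
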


\begin{proof} As the consistent initial condition \eqref{CIC}, the periodic boundary condition and the condition \eqref{RK-symplectic-condition} are considered, we can obtain from \textbf{Theorem \ref{thm:SD-CL}} that
	\begin{equation}\label{QZS-equation-3.16}
		q^n = |E^n|^2,\ n=0,1,2,\cdots,J,
	\end{equation}
	and
	\begin{equation}\label{QZS-equation-3.17}
		\mathcal{E}^{n+1}=\mathcal{E}^{n},\ \ n=0,1,2,\cdots,J,
	\end{equation}
where
\begin{align*}
\mathcal{E}^{n} = (\Delta E^n, E^n)-\varepsilon^{2} (\Delta E^n,\Delta E^n) -\frac{1}{2}(N^n,N^n)
     + \frac{\varepsilon^{2}}{2}(\Delta N^n,N^n) -(N^n, q^n) + \frac{1}{2} (\Delta v^n, v^n).
\end{align*}
Then substituting \eqref{QZS-equation-3.16} into \eqref{QZS-equation-3.17}, we obtain \eqref{QAVRK-OECL}. We finish the proof.
 \end{proof}


	\begin{scm} \label{scheme:SPRK} As the symplectic RK method is selected, the {\bf Scheme \ref{QZS-Scheme-3.1}} is equivalent to the following s-stage RK (using \eqref{QAVRKCL} of Theorem \ref{thm:SD-CL}):
		\begin{equation}\label{TDIV-QAV-EPRK}
			\begin{cases}
				E_{ni} =  E^n +  \tau \sum\limits_{j=1}^s a_{ij} k_j^{1}, \\
            k_i^{1} = {\rm i} ( \Delta E_{ni}- \varepsilon^2 \Delta^2 E_{ni} -N_{ni} E_{ni} ), \\
	    	N_{ni} =  N^n +  \tau \sum\limits_{j=1}^s a_{ij} k_j^{2}, \\
            k_i^{2} =  \Delta v_{ni}, \\
			v_{ni} = v^n +  \tau \sum\limits_{j=1}^s a_{ij}  k_j^{3}, \\
            k_i^{3}=   N_{ni} - \varepsilon^2 \Delta  N_{ni} + |E^n|^2 + 2 \tau \sum\limits_{j=1}^sa_{ij}  {\rm Re} ( \bar{E}_{nj}\cdot k_j^{1} ),
			\end{cases}
		\end{equation}
		where $(E^{n+1},N^{n+1}, v^{n+1})$ is obtained by
		\begin{align*}
		&	E^{n+1} =  E^n +  \tau \sum\limits_{i=1}^s b_{i} k_i^{1}, \\
		&N^{n+1} =  N^n +  \tau \sum\limits_{i=1}^s b_{i} k_i^{2}, \\
		&v^{n+1} =  v^n +  \tau \sum\limits_{i=1}^s b_{i} k_i^{3},
		\end{align*}
which implies that the QAV approach need introduce an auxiliary variable, but the auxiliary variable
can be eliminated in practical computations. Thus, it cannot increase additional computational costs.
	\end{scm}

\begin{rmk}\label{rmk-ZQs-3.1}
	We should note that the Gauss method where the RK coefficients $c_1,c_2,\cdots,c_s$ are chosen as the Gaussian quadrature
nodes, i.e., the zeros of the $s$-th shifted Legendre polynomial $\frac{d^s}{dx^s}(x^s(x-1)^s)$ satisfies the condition \eqref{RK-symplectic-condition} \cite{ELW06}. In particular,
the coefficients of the Gauss methods of order 2, 4 and 6 can be given\cite{ELW06,Sanzs88}, respectively, (see Table. \ref{Gaussian23}).

\begin{table}[H]
\centering
\begin{tabular}{c|cc}
${c}$ & ${A}$  \\
\hline
& ${b}^{T}$ \\
\end{tabular}
=
\begin{tabular}{c|c}
$\frac{1}{2}$ &$\frac{1}{2}$ \\
\hline
                                 &1
\end{tabular},\ \begin{tabular}{c|cc}
${c}$ & ${A}$  \\
\hline
& ${b}^{T}$ \\
\end{tabular}
=
\begin{tabular}{c|cc}
$\frac{1}{2}-\frac{\sqrt{3}}{6}$ &$\frac{1}{4}$ & $\frac{1}{4}- \frac{\sqrt{3}}{6}$\\
$\frac{1}{2}+\frac{\sqrt{3}}{6}$ &$\frac{1}{4}+ \frac{\sqrt{3}}{6}$ &$\frac{1}{4}$ \\
\hline
                                 &$\frac{1}{2}$&$\frac{1}{2}$
\end{tabular},\\
\begin{tabular}{c|cc}
${c}$ & ${A}$  \\
\hline
& ${b}^{T}$ \\
\end{tabular}
=\begin{tabular}{c|ccc}
$\frac{1}{2}-\frac{\sqrt{15}}{10}$ &$\frac{5}{36}$ &  $\frac{2}{9}-\frac{\sqrt{15}}{15}$    &$\frac{5}{36}- \frac{\sqrt{15}}{30}$\\
$\frac{1}{2}$   &$\frac{5}{36}+ \frac{\sqrt{15}}{24}$  &  $\frac{2}{9}$  & $\frac{5}{36}- \frac{\sqrt{15}}{24}$ \\
$\frac{1}{2}+\frac{\sqrt{15}}{10}$ & $\frac{5}{36}+ \frac{\sqrt{15}}{30}$ & $\frac{2}{9}+\frac{\sqrt{15}}{15}$  & $\frac{5}{36}$ \\
\hline
& $\frac{5}{18}$ & $\frac{4}{9}$  & $\frac{5}{18}$
\end{tabular}.
\caption{\footnotesize The Gauss methods of 2 (s=1), 4 (s=2) and 6 (s=3).}\label{Gaussian23}
\end{table}

\end{rmk}
\begin{rmk} If the Gauss method of order 2 (see Table. \ref{Gaussian23}) is selected, the {\bf Scheme \ref{scheme:SPRK}} reduced to
the following semi-discrete Crank-Nicolson scheme (CNS)\cite{CaiYongyong22ANM}
\begin{equation}
\left\{
\begin{split}
& {\rm i} \delta_tE^n+\Delta E^{n+\frac{1}{2}}- \varepsilon^2 \Delta^2 E^{n+\frac{1}{2}}-N^{n+\frac{1}{2}} E^{n+\frac{1}{2}}=0, \\
&\delta_{t}N^n=\Delta v^{n+\frac{1}{2}},\\
&\delta_{t}v^n-N^{n+\frac{1}{2}}+\varepsilon^2 \Delta N^{n+\frac{1}{2}}-\frac{1}{2}(|E^{n+1}|^2+|E^{n}|^2)=0,\ n=0,1,2,\cdots,J,\\
 \end{split}\right.
\end{equation}
where
\begin{align*}
\delta_tw^n=\frac{w^{n+1}-w^n}{\tau},\ w^{n+\frac{1}{2}}=\frac{w^{n+1}+w^n}{2}.
\end{align*}
\end{rmk}

\begin{rmk} It is well-known that the scalar auxiliary variable (SAV) approach \cite{SXY18,SXY2019SR} is also an efficient method for developing high-order accurate structure-preserving methods for the conservative systems \cite{CWJ2021cpc,JCQSjsc2022}. However, we should note that it is challenging for introducing a special scalar auxiliary variable to construct high-order accurate methods in time which can preserve the original Hamiltonian energy of the system.
\end{rmk}

\subsection{Full discretization }

In this subsection, the Fourier pseudo-spectral method is employed to discretize the \textbf{Scheme \ref{scheme:SPRK}} in spatial variables and a class of fully discrete structure-preserving schemes are presented.

Let the domain $\Omega=[a,b)\times[c,d)$ be uniformly partition with spatial steps $h_x=\frac{b-a}{\mathcal{N}_x}$ and $h_y=\frac{d-c}{\mathcal{N}_y}$, where $\mathcal{N}_x$ and $\mathcal{N}_y$ are two positive even integers.
Then, we denote the spatial grid points as
\begin{equation*}
\begin{aligned}
\Omega_{h}=\{(x_j,y_k) |x_j=a+jh_x,\ y_k=c+kh_y,\ 0\leq   j\leq \mathcal{N}_x-1,\ 0\leq   k\leq \mathcal{N}_y-1\}.
\end{aligned}
\end{equation*}
Let $w_{j,k}$ be the numerical approximation of  $w(x_j,y_k,t)$ on $\Omega_{h}$, and denote
  $$w:=(w_{0,0},w_{1,0},\cdots,w_{\mathcal{N}_x-1,0},w_{0,1},w_{1,1},\cdots,w_{\mathcal{N}_x-1,1},\cdots,w_{0,\mathcal{N}_y-1},w_{1,\mathcal{N}_y-1},\cdots,w_{\mathcal{N}_x-1,\mathcal{N}_y-1})^{T}$$ be the solution vector; we also define discrete inner product and norms as
\begin{align*}
\langle u,  w \rangle_h=h_xh_y \sum\limits_{j=0}^{\mathcal{N_x}-1}\sum\limits_{k=0}^{\mathcal{N}_y-1} u_{ j,k}{{{\bar{w}}}}_{ j,k}, \ \ \
\| w\|_h=\langle w ,w\rangle_h^{\frac{1}{2}}, \ \
\| w\|_{h,{\infty}}=\max\limits_{(x_j,y_k)\in\Omega_{h}}|w_{j,k}|.
 \end{align*}
  In addition, we denote $`\cdot$' as the element-wise product of vectors ${u}$ and  ${w}$, that is
\begin{align*}
u\cdot w=&\big(u_{0,0}w_{0,0},\cdots,u_{\mathcal{N}_x-1,0}w_{\mathcal{N}_x-1,0},\cdots,u_{0,\mathcal{N}_y-1}w_{0,\mathcal{N}_y-1},
\cdots,u_{\mathcal{N}_x-1,\mathcal{N}_y-1}w_{\mathcal{N}_x-1,\mathcal{N}_y-1}\big)^{T}.
\end{align*}
For brevity, we denote ${w}\cdot w$ and ${w}\cdot\bar{w}$  as ${w}^2$ and $|w|^2$, respectively.

Let ${X}_{j}(x)$  and  ${Y}_{k}(y)$  be the interpolation basis functions given by
\begin{align*}
&{X}_{i}(x)=\frac{1}{\mathcal{N}_x}\sum\limits_{m={-\mathcal{N}_x}/{2}}^{{\mathcal{N}_x}/{2}}\frac{1}{a_m}e^{\text{i}m\mu_x(x-x_i)},
\ {Y}_{j}(y)=\frac{1}{\mathcal{N}_y}\sum\limits_{m={-\mathcal{N}_y}/{2}}^{{\mathcal{N}_y}/{2}}\frac{1}{b_m}e^{\text{i}m\mu_y(y-y_j)},
\end{align*}
where $\mu_x=\frac{2\pi}{b-a},\ \mu_y=\frac{2\pi}{d-c}$, $
a_m=\left\{\begin{array}{lll}
1,\ &\mid m \mid < \frac{\mathcal{N}_x}{2},\vspace{2mm}
\\
2,\ &\mid m \mid = \frac{\mathcal{N}_x}{2}.
\end{array}
\right.
$ and $
b_m=\left\{\begin{array}{lll}
1,\ &\mid m \mid < \frac{\mathcal{N}_y}{2},\vspace{2mm}
\\
2,\ &\mid m \mid = \frac{\mathcal{N}_y}{2}.
\end{array}
\right.
$
Then, we define $\mathcal{S}^{''}$ as the interpolation space
\begin{align*}
\mathcal{S}^{''}={\rm span}\big\{{X}_j(x){Y}_k(y) |~0\leq j\leq \mathcal{N}_x-1,\ 0\leq j\leq \mathcal{N}_y-1\big\},
\end{align*} and the interpolation operator $I_\mathbb{N}:C(\Omega)\rightarrow \mathcal{S}^{''}$ is given by \cite{chenjb01}
\begin{align}\label{GSAV:eq:3.1}
I_{\mathbb{N}} w(x,y,t)=\sum\limits_{j=0}^{\mathcal{N}_x-1}\sum\limits_{j=0}^{\mathcal{N}_y-1}w(x_j,y_k,t){X}_j(x){Y}_k(y).
\end{align}
Taking the second-order derivative with respect to $x$ and $y$, respectively and the
resulting expression at the collocation points $w(x_j,y_k,t)$ reads
\begin{align}\label{GSAV:eq:3.2}
&\frac{\partial^2}{\partial x^2}I_{\mathbb{N}}w(x_j,y_k,t)=\sum_{l=0}^{\mathcal N_x-1}w(x_j,y_k,t)\frac{d^2}{dx^2}{X}_l(x_j)=\sum_{l=0}^{\mathcal N_x-1}(\mathbf{D}_{2}^{x})_{j,l}w(x_j,y_k,t),\\
&\frac{\partial^2}{\partial y^2}I_{\mathbb{N}}w(x_j,y_k,t)=\sum_{l=0}^{\mathcal N_y-1}w(x_j,y_k,t)\frac{d^2}{dy^2}{Y}_l(y_k)=\sum_{l=0}^{\mathcal N_y-1}w(x_j,y_k,t)(\mathbf{D}_{2}^{y})_{k,l},
\end{align}
where \cite{chenjb01}
\begin{align*}
(\mathbf{D}_{2}^{x})_{j,k}=\left\{\begin{array}{lll}
\frac{1}{2}\mu^2_x(-1)^{j+k+1}\csc^2\big(\mu_x\frac{x_j-x_k}{2}\big),\ & j\neq k,
\\
-\mu^2_x\frac{\mathcal{N}_x^2+2}{12},\ & j=k,
\end{array}
\right.\\
(\mathbf{D}_{2}^{y})_{j,k}=\left\{\begin{array}{lll}
\frac{1}{2}\mu^2_y(-1)^{j+k+1}\csc^2\big(\mu_y\frac{y_j-y_k}{2}\big),\ & j\neq k,
\\ \\
-\mu^2_y\frac{\mathcal{N}_y^2+2}{12},\ & j=k.
\end{array}
\right.\\
\end{align*}
\begin{lem}\label{lem:diagonalization} ~For the matrix $\mathbf{D}_{2}^{\theta}$ ($\theta=x$ or $y$), there exists the following relation \cite{ST06}
\begin{align}\label{GSAV:eq:3.3}
\mathbf{D}_{2}^{\theta}=\mathcal{F}_{\mathcal{N}_{\theta}}^{H}\Lambda_{\theta}^{}\mathcal{F}_{\mathcal{N}_{\theta}},
\end{align}
where $\mathcal{F}_{\mathcal{N}_{\theta}}$ denotes the discrete Fourier transform (DFT) matrix, and satisfies $\mathcal{F}_{\mathcal{N}_{\theta}}^{H}$ is the conjugate transpose matrix of $\mathcal{F}_{\mathcal{N}_{\theta}}$,
\begin{align}\label{GSAV:eq:3.5}
\Lambda_{\theta}^{}=-\mu_{\theta}^2\text{diag}\big[0^2, 1^2, \cdots, (\frac{\mathcal{N_{\theta}}}{2})^2,(-\frac{\mathcal{N_{\theta}}}{2}+1)^2,\cdots, (-2)^2,(-1)^2\big].
\end{align}
\end{lem}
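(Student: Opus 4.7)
The plan is to read off the factorization directly from the construction of the Fourier pseudo-spectral second-derivative matrix, using the fact that differentiation is diagonalized by the DFT. First I would observe that each interpolation basis function $X_l$ (resp.\ $Y_l$) is itself a trigonometric polynomial in the Fourier modes $\{e^{\mathrm{i} m \mu_\theta \theta}\}_{m=-\mathcal{N}_\theta/2}^{\mathcal{N}_\theta/2}$, so that for any grid vector $w$ the interpolant can be written as $I_{\mathbb{N}} w(\theta) = \sum_{m} \tfrac{1}{a_m} \hat{w}_m e^{\mathrm{i} m \mu_\theta \theta}$, where the coefficient vector $\hat{w} = \mathcal{F}_{\mathcal{N}_\theta} w$ is precisely the DFT of $w$.

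Next I would differentiate the interpolant twice in $\theta$, which multiplies the $m$-th Fourier coefficient by $(\mathrm{i} m \mu_\theta)^2 = -m^2 \mu_\theta^2$; this is exactly the action of the diagonal matrix $\Lambda_\theta$ defined in \eqref{GSAV:eq:3.5}. Sampling the result back at the collocation points $\theta_j$ is then the inverse DFT, i.e.\ multiplication by $\mathcal{F}_{\mathcal{N}_\theta}^H$. Composing these three steps yields $\mathbf{D}_2^\theta w = \mathcal{F}_{\mathcal{N}_\theta}^H \Lambda_\theta \mathcal{F}_{\mathcal{N}_\theta} w$ for every grid vector $w$, which is \eqref{GSAV:eq:3.3}. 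As a cross-check, evaluating $(\mathbf{D}_2^\theta)_{j,k} = \tfrac{d^2}{d\theta^2} X_k(\theta_j)$ from the closed-form definition of $X_k$ gives $\tfrac{1}{\mathcal{N}_\theta} \sum_{m} \tfrac{-m^2 \mu_\theta^2}{a_m} e^{\mathrm{i} m \mu_\theta (\theta_j - \theta_k)}$, which is exactly the $(j,k)$ entry of the right-hand side of \eqref{GSAV:eq:3.3}.

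The only subtle point, which I expect to be the main obstacle, is the treatment of the Nyquist mode $m = \pm \mathcal{N}_\theta/2$: the factor $a_m = 2$ averages the two conjugate modes in the basis so that the interpolant of real data remains real. Since both $e^{\pm \mathrm{i}(\mathcal{N}_\theta/2)\mu_\theta \theta}$ acquire the same eigenvalue $-(\mathcal{N}_\theta/2)^2 \mu_\theta^2$ under double differentiation, this averaging commutes with $\Lambda_\theta$, and the single Nyquist entry listed in \eqref{GSAV:eq:3.5} is consistent with the convention used to define $\mathcal{F}_{\mathcal{N}_\theta}$. Once this bookkeeping is verified, the identity \eqref{GSAV:eq:3.3} follows exactly as stated, matching the reference cited.
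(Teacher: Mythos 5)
Your argument is correct: expanding the interpolant in the Fourier basis, applying $\tfrac{d^2}{d\theta^2}$ mode-by-mode (which is exactly multiplication by $\Lambda_\theta$), and transforming back gives \eqref{GSAV:eq:3.3}, and you correctly identify the only delicate point, namely that the two half-weighted Nyquist modes $m=\pm\mathcal{N}_\theta/2$ carry the same eigenvalue $-(\mathcal{N}_\theta/2)^2\mu_\theta^2$ and therefore collapse into the single diagonal entry listed in \eqref{GSAV:eq:3.5}. The paper itself gives no proof of this lemma --- it is quoted directly from the cited reference --- so there is nothing to compare against; your derivation is the standard one and is complete.
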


Then, applying the Fourier pseudo-spectral method to the \textbf{Scheme \ref{scheme:SPRK}}, we obtain the following fully discrete scheme.

\begin{scm}\label{scheme:RD-QAV-EPRK} Let $b_i,a_{ij}({i,j=1,\cdots,s})$ be real numbers and let $c_i=\sum_{j=1}^sa_{ij}$.
 For given $(E^{n},N^{n}, v^{n})$, an $s$-stage RK Fourier pseudo-spectral method is given by
		\begin{equation} 
			\begin{cases}
				E_{ni} =  E^n +  \tau \sum\limits_{j=1}^s a_{ij} k_j^{1}, \\
            k_i^{1} = {\rm i} ( \Delta_h E_{ni}- \varepsilon^2 \Delta_h^2 E_{ni} -N_{ni}\cdot E_{ni} ), \\
	    	N_{ni} =  N^n +  \tau \sum\limits_{j=1}^s a_{ij} k_j^{2}, \\
            k_i^{2} =  \Delta_h v_{ni}, \\
			v_{ni} = v^n +  \tau \sum\limits_{j=1}^s a_{ij}  k_j^{3}, \\
            k_i^{3}=   N_{ni} - \varepsilon^2 \Delta_h  N_{ni} + |E^n|^2 + 2 \tau \sum\limits_{j=1}^sa_{ij} {\rm Re} ( \bar{E}_{nj}\cdot k_j^{1}),
			\end{cases}
		\end{equation}
where $\Delta_h= \mathbf{I}_y\otimes\mathbf{D}_{x}^{2}+\mathbf{D}_{y}^{2}\otimes\mathbf{I}_x $.
		Then $(E^{n+1},N^{n+1}, v^{n+1})$ is updated by
		\begin{align}
		&	E^{n+1} =  E^n +  \tau \sum\limits_{i=1}^s b_{i} k_i^{1}, \label{QAVRK-E} \\
		&N^{n+1} =  N^n +  \tau \sum\limits_{i=1}^s b_{i} k_i^{2},  \label{QAVRK-N} \\
		&v^{n+1} =  v^n +  \tau \sum\limits_{i=1}^s b_{i} k_i^{3}, \label{QAVRK-v}
		\end{align}
with the initial conditions
\begin{align}\label{intial-data}
		 E^{0} =  E_0({\bf x}), \
		 N^{0} =  N_0({\bf x}), \
		  \Delta_h  v^{0} =  N_1({\bf x}).
		\end{align}
We note that based on the condition  $\int_{\Omega} v(\mathbf{x},0) d \mathbf{x} =0$, we can obtain the zeroth Fourier coefficient on $v^0$ is zero, so that the Poisson equation $ \Delta_h  v^{0} =  N_1({\bf x})$ with periodic boundary conditions only has a unique  solution.
	\end{scm}

For the {\bf Scheme \ref{scheme:RD-QAV-EPRK}}, we can obtain the following theorem which can be carried out similarly as \textbf{Theorem \ref{thm:SD-CL}} and \textbf{Theorem \ref{thm:original-energy-conservation}}, respectively.

\begin{thm}\label{thm:FD-CL}
	If the RK coefficients of the {\bf Scheme \ref{scheme:RD-QAV-EPRK}} satisfy the condition \eqref{RK-symplectic-condition}, then {\bf Scheme \ref{scheme:RD-QAV-EPRK}} satisfies the following discrete conservative properties
\begin{itemize}
\item The discrete mass
\begin{align}\label{QAVRKMCL-FD}
\mathcal{M}_h^{n+1}=\mathcal{M}_h^{n},\ \mathcal{M}_h^{n}=\big\langle E^{n},E^{n}\big\rangle_h,\ n=0,1,2,\cdots,J.
\end{align}
\item The discrete quadratic invariant
\begin{align}\label{QAVRKCL-FD}
&q^n-|E^{n}|^2={\bf 0},\ n=0,1,2,\cdots,J.
\end{align}
\item The discrete Hamiltonian energy
\begin{align}\label{QAVRKECL-FD}
& \mathcal{H}_h^{n+1} = \mathcal{H}_h^n,\ n=0,1,2,\cdots,J,
	\end{align}
where
\begin{align*}
\mathcal{H}_h^n = &\langle \Delta_h E^n, E^n \rangle_h-\varepsilon^{2} \langle \Delta_h E^n,\Delta_h E^n\rangle_h\\
& -\frac{1}{2} \langle N^n,N^n\rangle_h
     + \frac{\varepsilon^{2}}{2} \langle \Delta_h N^n,N^n\rangle_h -\langle N^n, |E^n|^n\rangle_h + \frac{1}{2} \langle \Delta_h v^n, v^n\rangle_h.
\end{align*}
\end{itemize}
     \end{thm}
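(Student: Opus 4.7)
The plan is to transport the semi-discrete argument of Theorems \ref{thm:SD-CL} and \ref{thm:original-energy-conservation} to the fully discrete level. Everything rests on two observations. First, the manipulations in those proofs decompose into (a) purely algebraic cancellations triggered by the symplectic condition \eqref{RK-symplectic-condition}, which are combinatorial and transfer verbatim; and (b) integration-by-parts identities of the form $(\Delta w,k)=(w,\Delta k)$ and $(\Delta^2 w,k)=(\Delta w,\Delta k)$, which possess fully discrete counterparts once the self-adjointness of the pseudo-spectral Laplacian is available. Second, Lemma \ref{lem:diagonalization} gives $\mathbf{D}_{2}^{\theta}=\mathcal{F}_{\mathcal{N}_{\theta}}^{H}\Lambda_{\theta}\mathcal{F}_{\mathcal{N}_{\theta}}$ with $\Lambda_{\theta}$ real diagonal and $\mathcal{F}_{\mathcal{N}_{\theta}}$ unitary, so each $\mathbf{D}_{2}^{\theta}$ is Hermitian and $\Delta_h = \mathbf{I}_y\otimes\mathbf{D}_{2}^{x}+\mathbf{D}_{2}^{y}\otimes\mathbf{I}_x$ together with $\Delta_h^{2}$ are self-adjoint under $\langle\cdot,\cdot\rangle_h$. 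This is the discrete substitute for integration by parts that I need.

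For the discrete mass \eqref{QAVRKMCL-FD}, I would expand $\mathcal{M}_h^{n+1}-\mathcal{M}_h^{n}$ using \eqref{QAVRK-E}, reproducing the analogue of \eqref{u2diff1}. Substituting $E^{n}=E_{ni}-\tau\sum_{j}a_{ij}k_j^{1}$ in the cross terms and invoking \eqref{RK-symplectic-condition} annihilates the $\tau^{2}$ coefficient and yields the analogue of \eqref{u2diff2}. Inserting $k_i^{1}=\mathrm{i}(\Delta_h E_{ni}-\varepsilon^{2}\Delta_h^{2}E_{ni}-N_{ni}\cdot E_{ni})$ and using self-adjointness of $\Delta_h$ and $\Delta_h^{2}$, the remainder collapses to $2\tau\sum_{i}b_{i}\,\mathrm{Re}(\mathrm{i}\,r_i)$ with each $r_i\in\mathbb{R}$, which vanishes.

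The discrete quadratic invariant \eqref{QAVRKCL-FD} involves no spatial operator and is strictly pointwise on the grid: the proof of \eqref{QAVRKCL} carries over unchanged by interpreting $|E^{n}|^{2}$ as the elementwise product $E^{n}\cdot\bar E^{n}$, and the elimination used in Scheme \ref{scheme:SPRK}/Scheme \ref{scheme:RD-QAV-EPRK} together with the initial condition $q^{0}=|E^{0}|^{2}$ closes the induction. For the discrete Hamiltonian \eqref{QAVRKECL-FD}, I would next compute each bilinear increment $\langle\Delta_h E^{n+1},E^{n+1}\rangle_h-\langle\Delta_h E^{n},E^{n}\rangle_h$ and its counterparts for $\langle\Delta_h E,\Delta_h E\rangle_h$, $\langle N,N\rangle_h$, $\langle\Delta_h N,N\rangle_h$, $\langle N,|E|^{2}\rangle_h$ and $\langle\Delta_h v,v\rangle_h$ in exact parallel with the proof of Theorem \ref{thm:SD-CL}, assemble them with the stage relations, and observe that the sum collapses to $2\tau\sum_{i}b_{i}\,\mathrm{Re}\langle k_i^{1},-\mathrm{i}\,k_i^{1}\rangle_h=0$. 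Finally, the pointwise equality $q^{n}=|E^{n}|^{2}$ proved in the previous step identifies the modified energy with the original $\mathcal{H}_h^{n}$.

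The only genuinely new ingredient relative to the semi-discrete case is confirming the self-adjointness of $\Delta_h$ and $\Delta_h^{2}$ under $\langle\cdot,\cdot\rangle_h$, which I expect to dispatch in a single sentence via Lemma \ref{lem:diagonalization}. Once that is in place every line of the semi-discrete proof translates under the substitutions $(\cdot,\cdot)\leftrightarrow\langle\cdot,\cdot\rangle_h$ and $\Delta\leftrightarrow\Delta_h$, so I anticipate no substantive obstacle beyond careful bookkeeping.
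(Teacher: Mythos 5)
Your proposal is correct and follows essentially the same route as the paper, which itself omits the details and states that the fully discrete proof "can be carried out similarly as Theorem \ref{thm:SD-CL} and Theorem \ref{thm:original-energy-conservation}." You correctly identify the only genuinely new ingredient — the self-adjointness of $\Delta_h$ and $\Delta_h^2$ under $\langle\cdot,\cdot\rangle_h$, guaranteed by Lemma \ref{lem:diagonalization} (indeed $\mathbf{D}_2^{\theta}$ is real symmetric), which replaces integration by parts — and the rest of the argument transfers verbatim under $(\cdot,\cdot)\leftrightarrow\langle\cdot,\cdot\rangle_h$, $\Delta\leftrightarrow\Delta_h$.
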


\section{An efficient implementation for the proposed scheme}

In this section, we will present an efficient fixed point iteration method to solve the resulting nonlinear equations of the {\bf Scheme \ref{scheme:RD-QAV-EPRK}}, which is mainly based on the matrix diagonalization
method (see Ref. \cite{ST06} and references therein) and the discrete Fourier transform
 algorithm. For simplicity, we only consider the one dimensional QZS \eqref{eq1.1} and generalizations to two dimensional case is straightforward for tensor product grids and the results remain valid with modifications. In addition, we only take the 2-stage Gauss method (i.e., $s=2$) as an example in which the RK coefficients $a_{ij},\ b_i \ (i,j=1,2,\cdots,s)$ are given in Table \ref{Gaussian23}. 

For given $E^n,\ N^n$ and $ v^n$, 2-stage Gauss method is equivalent to
\begin{align}
&k_1^1=\text{i}(\Delta_h-\varepsilon^2\Delta_h^2)E_{n1}-\text{i}N_{n1}\cdot E_{n1},\quad k_2^1=\text{i}(\Delta_h-\varepsilon^2\Delta_h^2)E_{n2}-\text{i}N_{n2}\cdot E_{n2},\label{f-s-1} \\
& k_1^2=\Delta_hv_{n1},\hspace{4cm}   k_2^2=\Delta_hv_{n2},\label{f-s-2}  \\
&k_1^3=(1-\varepsilon^2\Delta_h)N_{n1}+Q_1,\hspace{1.85cm}     k_2^3=(1-\varepsilon^2\Delta_h)N_{n2}+Q_2,\label{f-s-3} \\
&E_{n1} = E^n+\tau a_{11}k_1^1+\tau a_{12}k_2^1,\hspace{1.2cm}  E_{n2} = E^n+\tau a_{21}k_1^1+\tau a_{22}k_2^1,\label{f-s-5}\\
&N_{n1} = N^n+\tau a_{11}k_1^2+\tau a_{12}k_2^2,\hspace{1.15cm}  N_{n2} = N^n+\tau a_{21}k_1^2+\tau a_{22}k_2^2,\label{f-s-6}\\
&v_{n1} = v^n+\tau a_{11}k_1^3+\tau a_{12}k_2^3,\hspace{1.5cm} v_{n2} = v^n+\tau a_{21}k_1^3+\tau a_{22}k_2^3,\label{f-s-7}
\end{align}
where
\begin{align}
&Q_1 = |E^n|^2+\tau a_{11}k_1^4+\tau a_{12}k_2^4,\  Q_2 = |E^n|^2+\tau a_{21}k_1^4+\tau a_{22}k_2^4,\label{f-s-8} \\
&k_1^4=2\text{Re}(\bar{E}_{n1}\cdot k_1^1),~~~~~~\hspace{1.2cm}  k_2^4=2\text{Re}(\bar{E}_{n2}\cdot k_2^1). \label{f-s-4}
\end{align}
Then, $E^{n+1},\ N^{n+1}$ and $v^{n+1}$ are obtained by
\begin{align}\label{n-f-s-9}
&E^{n+1}=E^n+\tau b_1k_1^1+\tau b_2k_2^1,\ N^{n+1}=N^n+\tau b_1k_1^2+\tau b_2k_2^2,\\\label{n-f-s-10}
&v^{n+1}=v^{n}+\tau b_1k_1^3+\tau b_2k_2^3.
\end{align}
With \eqref{f-s-1} and \eqref{f-s-5}, we have
\begin{align}
&\left[
  \begin{array}{cc}\vspace{2mm}
    1-\tau\text{i}(\Delta_h-\varepsilon^2\Delta_h^2)a_{11} & -\tau\text{i}(\Delta_h-\varepsilon^2\Delta_h^2)a_{12}\\
    -\tau\text{i}(\Delta_h-\varepsilon^2\Delta_h^2)a_{21} & 1-\tau\text{i}(\Delta_h-\varepsilon^2\Delta_h^2)a_{22}\\
  \end{array}
\right]\left[
  \begin{array}{c} \vspace{2mm}
  k_1^1\\
  k_2^1
\end{array}
  \right]\nonumber\\\label{f-s-9}
  &~~~~~~~~~~~~~~~~~~~~~~~~~~~~~~~~~~~~~~~~~~~~~~~~~~~~~~~~~~~=\left[
  \begin{array}{c}\vspace{2mm}
  \text{i}(\Delta_h-\varepsilon^2\Delta_h^2)E^n-\text{i}N_{n1}\cdot E_{n1}\\
  \text{i}(\Delta_h-\varepsilon^2\Delta_h^2)E^n-\text{i}N_{n2}\cdot E_{n2}
\end{array}
  \right].
\end{align}
Analogously, we can obtain from \eqref{f-s-2}-\eqref{f-s-3} and \eqref{f-s-6}-\eqref{f-s-7} that
\begin{align}
&\left[
  \begin{array}{cc}\vspace{2mm}
    1-\tau^2(a_{11}^2+a_{12}a_{21})(\Delta_h-\varepsilon^2\Delta_h^2) & -\tau^2(a_{11}a_{12}+a_{12}a_{22})(\Delta_h-\varepsilon^2\Delta_h^2)\\
    -\tau^2(a_{21}a_{11}+a_{22}a_{21})(\Delta_h-\varepsilon^2\Delta_h^2) & 1-\tau^2(a_{21}a_{12}+a_{22}^2)(\Delta_h-\varepsilon^2\Delta_h^2)\\
  \end{array}
\right]\left[
  \begin{array}{c} \vspace{2mm}
  k_1^2\\
  k_2^2
\end{array}
  \right]\nonumber\\\label{f-s-10}
  &~~~~~~~~~~~~~~~~~~~~~~~~~~~~~~=\left[
  \begin{array}{c}\vspace{2mm}
  \tau(a_{11}+a_{12})(\Delta_h-\varepsilon^2\Delta_h^2)N^n+\Delta_h v^n+\tau a_{11}\Delta_h Q_1+\tau a_{12}\Delta_h Q_2\\
  \tau(a_{21}+a_{22})(\Delta_h-\varepsilon^2\Delta_h^2)N^n+\Delta_h v^n+\tau a_{21}\Delta_h Q_1+\tau a_{22}\Delta_h Q_2
\end{array}
  \right].
\end{align}
It is clear to see that if $k_i^1$ and $k_i^2 \ (i=1,2)$ are obtained after solving the nonlinear equations \eqref{f-s-9} and \eqref{f-s-10}, respectively, and $E_{ni}\ (i=1,2)$ and $N_{ni}\ (i=1,2)$ are updated by \eqref{f-s-5} and \eqref{f-s-6}, respectively. Subsequently, $k_i^3 \ (i=1,2)$ and $k_i^4 \ (i=1,2)$ are updated by \eqref{f-s-3} and \eqref{f-s-4}, respectively. Thus, for the 2-stage Gauss method, we only need to solve the nonlinear equations \eqref{f-s-9} and \eqref{f-s-10}, respectively. For the nonlinear algebraic equations \eqref{f-s-9} and \eqref{f-s-10}, we apply the following fixed-point iteration strategy,
\begin{align*}
&\left[
  \begin{array}{cc}\vspace{2mm}
    1-\tau\text{i}(\Delta_h-\varepsilon^2\Delta_h^2)a_{11} & -\tau\text{i}(\Delta_h-\varepsilon^2\Delta_h^2)a_{12}\\
    -\tau\text{i}(\Delta_h-\varepsilon^2\Delta_h^2)a_{21} & 1-\tau\text{i}(\Delta_h-\varepsilon^2\Delta_h^2)a_{22}\\
  \end{array}
\right]\left[
  \begin{array}{c} \vspace{2mm}
  (k_1^1)^{l+1}\\
  (k_2^1)^{l+1}
\end{array}
  \right]\nonumber\\
  &~~~~~~~~~~~~~~~~~~~~~~~~~~~~~~~~~~~~~~~~~~~~~~~~~~~~~~~~~~~~~~~~~~~~~~~~~~~~~~~~=\left[
  \begin{array}{c}\vspace{2mm}
  \text{i}(\Delta_h-\varepsilon^2\Delta_h^2)E^n-\text{i}N_{n1}^{l}\cdot E_{n1}^{l}\\
  \text{i}(\Delta_h-\varepsilon^2\Delta_h^2)E^n-\text{i}N_{n2}^{l}\cdot E_{n2}^{l}
\end{array}
  \right],
\end{align*}
and
\begin{align*}
&\left[
  \begin{array}{cc}\vspace{2mm}
    1-\tau^2(a_{11}^2+a_{12}a_{21})(\Delta_h-\varepsilon^2\Delta_h^2) & -\tau^2(a_{11}a_{12}+a_{12}a_{22})(\Delta_h-\varepsilon^2\Delta_h^2)\\
    -\tau^2(a_{21}a_{11}+a_{22}a_{21})(\Delta_h-\varepsilon^2\Delta_h^2) & 1-\tau^2(a_{21}a_{12}+a_{22}^2)(\Delta_h-\varepsilon^2\Delta_h^2)\\
  \end{array}
\right]\left[
  \begin{array}{c} \vspace{2mm}
  (k_1^2)^{l+1}\\
  (k_2^2)^{l+1}
\end{array}
  \right]\nonumber\\
  &~~~~~~~~~~~~~~~~~~~~~~~~~~~~~~~~~~~~~~~=\left[
  \begin{array}{c}\vspace{2mm}
  \tau(a_{11}+a_{12})(\Delta_h-\varepsilon^2\Delta_h^2)N^n+\Delta_h v^n+\tau a_{11}\Delta_h Q_1^{l}+\tau a_{12}\Delta_h Q_2^{l}\\
  \tau(a_{21}+a_{22})(\Delta_h-\varepsilon^2\Delta_h^2)N^n+\Delta_h v^n+\tau a_{21}\Delta_h Q_1^{l}+\tau a_{22}\Delta_h Q_2^{l}
\end{array}
  \right],
\end{align*}
where
\begin{align*}
&E_{n1}^{l} = E^n+\tau a_{11}(k_1^1)^{l}+\tau a_{12}(k_2^1)^{l},\quad E_{n2}^{l} = E^n+\tau a_{21}(k_1^1)^{l}+\tau a_{22}(k_2^1)^{l},\\
&N_{n1}^{l} = N^n+\tau a_{11}(k_1^2)^{l}+\tau a_{12}(k_2^2)^{l},\quad  N_{n2}^{l} = N^n+\tau a_{21}(k_1^2)^{l}+\tau a_{22}(k_2^2)^{l},\\
&Q_{1}^{l} = |E^n|^2+2\tau a_{11}\text{Re}(\bar{E}_{n1}^l\cdot (k_1^1)^l)+2\tau a_{12}\text{Re}(\bar{E}_{n2}^l\cdot (k_2^1)^l),\\
&Q_{2}^{l} = |E^n|^2+2\tau a_{21}\text{Re}(\bar{E}_{n1}^l\cdot (k_1^1)^l)+2\tau a_{22}\text{Re}(\bar{E}_{n2}^l\cdot (k_2^1)^l).
\end{align*}
Then, it follows from \textbf{Lemma \ref{lem:diagonalization}} that
\begin{align}\label{f-s-4.13}
&\left[
  \begin{array}{cc}\vspace{2mm}
    1-\tau\text{i}(\Lambda_x-\varepsilon^2\Lambda_x^2)a_{11} & -\tau\text{i}(\Lambda_x-\varepsilon^2\Lambda_x^2)a_{12}\\
    -\tau\text{i}(\Lambda_x-\varepsilon^2\Lambda_x^2)a_{21} & 1-\tau\text{i}(\Lambda_x-\varepsilon^2\Lambda_x^2)a_{22}\\
  \end{array}
\right]\left[
  \begin{array}{c} \vspace{2mm}
  \widetilde{(k_1^1)^{l+1}}\\
  \widetilde{(k_2^1)^{l+1}}
\end{array}
  \right]\nonumber\\
  &~~~~~~~~~~~~~~~~~~~~~~~~~~~~~~~~~~~~~~~~~~~~~~~~~~~~~~~~~~~~~~~~~=\left[
  \begin{array}{c}\vspace{2mm}
  \text{i}(\Lambda_x-\varepsilon^2\Lambda_x^2)\widetilde{ E^n}-\widetilde{\text{i}N_{n1}^{l}\cdot E_{n1}^{l}}\\
  \text{i}(\Lambda_x-\varepsilon^2\Lambda_x^2)\widetilde{ E^n}-\widetilde{\text{i}N_{n2}^{l}\cdot E_{n2}^{l}}
\end{array}
  \right],
\end{align}
and
\begin{align}\label{f-s-4.14}
&\left[
  \begin{array}{cc}\vspace{2mm}
    1-\tau^2(a_{11}^2+a_{12}a_{21})(\Lambda_x-\varepsilon^2\Lambda_x^2) & -\tau^2(a_{11}a_{12}+a_{12}a_{22})(\Lambda_x-\varepsilon^2\Lambda_x^2)\\
    -\tau^2(a_{21}a_{11}+a_{22}a_{21})(\Lambda_x-\varepsilon^2\Lambda_x^2) & 1-\tau^2(a_{21}a_{12}+a_{22}^2)(\Lambda_x-\varepsilon^2\Lambda_x^2)\\
  \end{array}
\right]\left[
  \begin{array}{c} \vspace{2mm}
  \widetilde{(k_1^2)^{l+1}}\\
  \widetilde{(k_2^2)^{l+1}}
\end{array}
  \right]\nonumber\\
  &~~~~~~~~~~~~~~~~~~~~~~~~~~~~~=\left[
  \begin{array}{c}\vspace{2mm}
  \tau(a_{11}+a_{12})(\Lambda_x-\varepsilon^2\Lambda_x^2)\widetilde{ N^n}+\Lambda_x \widetilde{ v^n}+\tau a_{11}\Lambda_x \widetilde{ Q_1^{l}}+\tau a_{12}\Lambda_x \widetilde{ Q_2^{l}}\\
  \tau(a_{21}+a_{22})(\Lambda_x-\varepsilon^2\Lambda_x^2)\widetilde{ N^n}+\Lambda_x \widetilde{ v^n}+\tau a_{21}\Lambda_x \widetilde{ Q_1^{l}}+\tau a_{22}\Lambda_x \widetilde{ Q_2^{l}}
\end{array}
  \right],
\end{align}
where $\widetilde{\bullet}=\mathcal{F}_{\mathcal{N}_{x}}\bullet$ and $\mathcal{F}_{\mathcal{N}_{x}}$ is the discrete Fourier transform  matrix. Then we rewrite \eqref{f-s-4.13} and \eqref{f-s-4.14} into component-wise form, as follows:
\begin{align*}
&\left[
  \begin{array}{cc}\vspace{2mm}
    1-\tau\text{i}(\Delta_{j}-\varepsilon^2\Delta_{j}^2)a_{11} & -\tau\text{i}(\Delta_{j}-\varepsilon^2\Delta_{j}^2)a_{12}\\
    -\tau\text{i}(\Delta_{j}-\varepsilon^2\Delta_{j}^2)a_{21} & 1-\tau\text{i}(\Delta_{j}-\varepsilon^2\Delta_{j}^2)a_{22}\\
  \end{array}
\right]\left[
  \begin{array}{c} \vspace{2mm}
  \widetilde{ (k_1^1)_j^{l+1}}\\
  \widetilde{ (k_2^1)_j^{l+1}}
\end{array}
  \right]\nonumber\\
  &~~~~~~~~~~~~~~~~~~~~~~~~~~~~~~~~~~~~~~~~~~~~~~~~~~~~~~~~~~~~~~=\left[
  \begin{array}{c}\vspace{2mm}
  \text{i}(\Delta_{j}-\varepsilon^2\Delta_{j}^2)\widetilde{ E^n_j}-(\widetilde{\text{i}N_{n1}^{l}\cdot E_{n1}^{l}})_j\\
  \text{i}(\Delta_{j}-\varepsilon^2\Delta_{j}^2)\widetilde{ E^n_j}-(\widetilde{\text{i}N_{n2}^{l}\cdot E_{n2}^{l}})_j
\end{array}
  \right],
\end{align*}
and
\begin{align*}
&\left[
  \begin{array}{cc}\vspace{2mm}
    1-\tau^2(a_{11}^2+a_{12}a_{21})(\Delta_{j}-\varepsilon^2\Delta_{j}^2) & -\tau^2(a_{11}a_{12}+a_{12}a_{22})(\Delta_{j}-\varepsilon^2\Delta_{j}^2)\\
    -\tau^2(a_{21}a_{11}+a_{22}a_{21})(\Delta_{j}-\varepsilon^2\Delta_{j}^2) & 1-\tau^2(a_{21}a_{12}+a_{22}^2)(\Delta_{j}-\varepsilon^2\Delta_{j}^2)\\
  \end{array}
\right]\left[
  \begin{array}{c} \vspace{2mm}
  \widetilde{ (k_1^2)_j^{l+1}}\\
  \widetilde{ (k_2^2)_j^{l+1}}
\end{array}
  \right]\nonumber\\
  &~~~~~~~~~~~~~~~~~~~~~=\left[
  \begin{array}{c}\vspace{2mm}
  \tau(a_{11}+a_{12})(\Delta_{j}-\varepsilon^2\Delta_{j}^2)\widetilde{ N^n_j}+\Delta_{j}  \widetilde{ v^n_j}+\tau a_{11}\Delta_{j} \widetilde{ (Q_1^{l})_j}+\tau a_{12}\Delta_{j} \widetilde{ (Q_2^{l})_j}\\
  \tau(a_{21}+a_{22})(\Delta_{j}-\varepsilon^2\Delta_{j}^2)\widetilde{ N^n_j}+\Delta_{j}  \widetilde{ v^n_j}+\tau a_{21}\Delta_{j}  \widetilde{ (Q_1^{l})_j}+\tau a_{22}\Delta_{j} \widetilde{ (Q_2^{l})_j}
\end{array}
  \right],
\end{align*}
where $\Delta_{j}=(\Lambda_x)_j$ and $j=0,1,2,\cdots,\mathcal{N}_x-1$.

Solving above two linear systems for all $j=0,1,2,\cdots,\mathcal{N}_x-1$, we obtain $\widetilde{ (k_i^1)^{l+1}}$ and $\widetilde{ (k_i^2)^{l+1}}$, then $(k_i^1)^{l+1}$ and $(k_i^2)^{l+1}$ are updated by
$\mathcal{F}_{\mathcal{N}_{x}}^H\widetilde{ (k_i^1)^{l+1}}$ and $\mathcal{F}_{\mathcal{N}_{x}}^H\widetilde{ (k_i^2)^{l+1}}\ (i=1,2),$ respectively. In our computations, we take the iterative initial value $(k_i^1)^{0}=E^n$ and $(k_i^2)^{0}=N^n,\ i=1,2$.
The iteration terminates when  the number of maximum iterative step  $M = 30$  is reached or the infinity norm of the error
between two adjacent iterative steps is less than $10^{-14}$, that is
\begin{align*}
\mathop{\rm max}\Big \{ \mathop{\rm max}\limits_{l\leqslant i\leqslant 2} \big \{  \lVert ({k}_i^1)^{l+1}-({k}_i^1)^l \rVert_{h,\infty} \big \},\mathop{\rm max}\limits_{l\leqslant i\leqslant 2} \big \{  \lVert ({k}_i^2)^{l+1}-({k}_i^2)^l \rVert_{h,\infty} \big \}\Big \}  < 10^{-14}.
\end{align*}
   After $k_i^1$ and $k_i^2,\ i=1,2$ are obtained, we then have
 \begin{itemize}
 \item Step 1: $E^{n+1}$ and $N^{n+1}$ are obtained from \eqref{n-f-s-9},  $E_{ni}$ and $N_{ni}  \ (i=1,2)$ are obtained from \eqref{f-s-5} and \eqref{f-s-6};
 \item Step 2: We first obtain $k_i^4$ from \eqref{f-s-4} and $Q_i \ (i=1,2)$ is then calculated by \eqref{f-s-8};
 \item Step 3: $k_i^3 \ (i=1,2)$ are then obtained from \eqref{f-s-3} and $v^{n+1}$ is  updated by \eqref{n-f-s-10}.
\end{itemize}

{\begin{rmk} Let $\widetilde{\bullet}=\mathcal{F}_{\mathcal{N}_{x}}\bullet\mathcal{F}_{\mathcal{N}_{y}}^{H}$ where $\bullet$ represents the matrix of $\mathcal{N}_{x}\times \mathcal{N}_{y}$. By the similar argument as the 1D case, the efficient implementation of the proposed scheme for the 2D case can be expressed into the following component-wise form, as $s=2$ is chosen:
\begin{align*}
&\left[
  \begin{array}{cc}\vspace{2mm}
    1-\tau\text{i}\big(\Delta_{j,k}-\varepsilon^2\Delta_{j,k}^2\big)a_{11} & -\tau\text{i}(\Delta_{j,k}-\varepsilon^2\Delta_{j,k}^2)a_{12}\\
    -\tau\text{i}(\Delta_{j,k}-\varepsilon^2\Delta_{j,k}^2)a_{21} & 1-\tau\text{i}(\Delta_{j,k}-\varepsilon^2\Delta_{j,k}^2)a_{22}\\
  \end{array}
\right]\left[
  \begin{array}{c} \vspace{2mm}
  \Big(\widetilde{ (k_1^1)^{l+1}}\Big)_{j,k}\\
  \Big(\widetilde{ (k_2^1)^{l+1}}\Big)_{j,k}
\end{array}
  \right]\nonumber\\
  &~~~~~~~~~~~~~~~~~~~~~~~~~~~~~~~~~~~~~~~~~~~~~~~~~~~~~~~~~~=\left[
  \begin{array}{c}\vspace{2mm}
  \text{i}(\Delta_{j,k}-\varepsilon^2\Delta_{j,k}^2)(\widetilde{E^n})_{j,k}-\Big(\widetilde{\text{i}N_{n1}^{l}\cdot E_{n1}^{l}}\Big)_{j,k}\\
  \text{i}(\Delta_{j,k}-\varepsilon^2\Delta_{j,k}^2)(\widetilde{ E^n})_{j,k}-\Big(\widetilde{\text{i}N_{n2}^{l}\cdot E_{n2}^{l}}\Big)_{j,k}
\end{array}
  \right],
\end{align*}
and
\begin{align*}
&\left[
  \begin{array}{cc}\vspace{2mm}
    1-\tau^2(a_{11}^2+a_{12}a_{21})(\Delta_{j,k}-\varepsilon^2\Delta_{j,k}^2) & -\tau^2(a_{11}a_{12}+a_{12}a_{22})(\Delta_{j,k}-\varepsilon^2\Delta_{j,k}^2)\\
    -\tau^2(a_{21}a_{11}+a_{22}a_{21})(\Delta_{j,k}-\varepsilon^2\Delta_{j,k}^2) & 1-\tau^2(a_{21}a_{12}+a_{22}^2)(\Delta_{j,k}-\varepsilon^2\Delta_{j,k}^2)\\
  \end{array}
\right]\left[
  \begin{array}{c} \vspace{2mm}
  \Big(\widetilde{ (k_1^2)^{l+1}}\Big)_{j,k}\\
  \Big(\widetilde{ (k_2^2)_j^{l+1}}\Big)_{j,k}
\end{array}
  \right]\nonumber\\
  &~~~~~=\left[
  \begin{array}{c}\vspace{2mm}
  \tau(a_{11}+a_{12})\Big(\Delta_{j,k}-\varepsilon^2\Delta_{j,k}^2\Big)(\widetilde{ N^n})_{j,k}+\Delta_{j,k}(\widetilde{v^n})_{j,k}+\tau a_{11}\Delta_{j,k} (\widetilde{ (Q_1^{l})})_{j,k}+\tau a_{12}\Delta_{j,k}(\widetilde{ Q_2^{l}})_{j,k}\\
  \tau(a_{21}+a_{22})(\Delta_{j,k}-\varepsilon^2\Delta_{j,k}^2)(\widetilde{N^n})_{j,k}+\Delta_{j,k}(\widetilde{ v^n})_{j,k}+\tau a_{21}\Delta_{j,k}(\widetilde{Q_1^{l}})_{j,k}+\tau a_{22}\Delta_{j,k}(\widetilde{Q_2^{l}})_{j,k}
\end{array}
  \right],
\end{align*}
where $\Delta_{j,k}=(\Lambda_x)_j+(\Lambda_y)_k$, $j=0,1,2,\cdots,\mathcal{N}_x-1,\ k=0,1,2,\cdots,\mathcal{N}_y-1$. After we obtain $\widetilde{ (k_i^1)^{l+1}}$ and $\widetilde{ (k_i^2)^{l+1}}$, then $(k_i^1)^{l+1}$ and $(k_i^2)^{l+1}$ are updated by
$\mathcal{F}_{\mathcal{N}_{x}}^H\widetilde{ (k_i^1)^{l+1}}\mathcal{F}_{\mathcal{N}_{y}}$ and $\mathcal{F}_{\mathcal{N}_{x}}^H\widetilde{ (k_i^2)^{l+1}}\mathcal{F}_{\mathcal{N}_{y}}\ (i=1,2),$ respectively. We should note that the Fast Fourier Transform (FFT) algorithm is employed to the above process.

\end{rmk}}

\section{Numerical results}

The purpose of this section is to test validity and efficiency of the newly proposed schemes  for solving the QZS \eqref{eq1.1}
with the periodic boundary condition. In particular, when the $s$-stage Gauss method (see Remark \ref{rmk-ZQs-3.1}) is employed for the {\bf Scheme \ref{scheme:RD-QAV-EPRK}}, we denote the resulting structure-preserving RK (SPRK) method as SPRK-$s$. For brevity, in the rest of this paper, 2 and 3-stage Gauss methods (see Table. \ref{Gaussian23}) are only used for demonstration purposes. Also, the results are compared with the existing TS-EWI-FP method \cite{ZhangG21} and CNS \cite{CaiYongyong22ANM}, where the Fourier pseudo-spectral method takes the place of the compact finite difference method in space. Numerical examples including accuracy tests, convergence rates of the QZS \eqref{eq1.1} to the
classical ZS in the semi-classical limit, soliton collisions and pattern dynamics of the QZS \eqref{eq1.1} in 1D. Let $E_{h,\tau}^n$ and $N_{h,\tau}^n$ be the numerical solutions of $E(\cdot,t_n)$ and $N(\cdot,t_n)$ obtained by SPRK-2 and SPRK-3 with the spatial step $h$ and the time step $\tau$, respectively.
To quantify the numerical methods, we define the error
functions as, respectively
\[
e_{\varepsilon}(t_n) = \| E(\cdot,t_n) - E_{h,\tau}^n \|_{h,\infty}, \quad
n_{\varepsilon}(t_n) = \| N(\cdot,t_n) - N_{h,\tau}^n \|_{h,\infty}.
\]
Meanwhile, we also define the relative residuals on the  mass and energy as, respectively
\[RM^{n} = \Big| \frac{\mathcal{M}_h^n - \mathcal{M}_h^0}{\mathcal{M}_h^0}  \Big|,\
RH^{n} = \Big| \frac{\mathcal{H}_h^n - \mathcal{H}_h^0 }{\mathcal{H}_h^0} \Big|.
\]

\subsection{Accuracy test}
{ \bf Example 5.1.} As we choose $\varepsilon=0$, the one-dimensional QZS \eqref{eq1.1} reduces to the classical ZS (cf. \cite{Zakharov72}), which admits a solitary-wave solution given by \cite{Glassey92}
\begin{equation}
\label{eq5.2}
\begin{split}
&E(x, t)= {\rm i} \sqrt{2 B^{2}\left(1-V^{2}\right)}\,\sech\left(B\left(x-x_{0}-V t\right)\right)
   e^{{\rm i} \left(V\left(x-x_{0}\right) / 2-\left(V^{2} / 4-B^{2}\right) t\right)},\\
&N(x, t)=-2 B^{2} \,\sech^{2}\left(B\left(x-x_{0}-V t\right)\right),
\end{split}
\end{equation}
where $B$ is a constant, $x_{0}$ and $V$ represent  the initial displacement and
 the propagation velocity of the soliton, respectively.
In this test, the computations are done on the interval $\Omega= [-128,128)$, and we choose the parameters $B = 1$, $V = \frac{1}{ 2} $,  $x_{0} = 0$,
as well as the following initial conditions
\begin{equation}
\label{eq5.3}
E_{0}(x)={\rm i} \sqrt{1.5} \,\sech(x) e^{{\rm i}x / 4}, \quad N_{0}(x)=-2 \,\sech^{2}(x), \quad
N_{1}(x)=-2 \,\sech^2 (x) \tanh(x).
\end{equation}

\begin{table}[H]
\caption{Spatial errors provided by SPRK-2 and SPRK-3 at $T=1$ for the  classical ZS with the initial conditions \eqref{eq5.3}.}\label{tabs1}
\setlength{\tabcolsep}{3mm}
\centering
\begin{tabular}{cccccccc} \toprule
 Scheme  &  $\varepsilon=0$ & $h_0 = 1   $ & $h_0/2 $  & $h_0/2^2 $ & $h_0/2^3 $   \\
\midrule
SPRK-$2$         &  $e_{0}(1)$  & 4.58e-2 &  8.90e-5  & 1.38e-9   &3.25e-15\\
  &  $n_{0}(1)$  &  9.37e-2 &  7.15e-4  & 1.77e-8  & 5.67e-13 \\
\midrule \midrule
SPRK-$3$  & $e_{0}(1)$  & 4.58e-2 &   8.90e-5 &   1.38e-9 &   3.91e-15 \\
   &  $n_{0}(1)$  & 9.37e-2  &  7.15e-4  &  1.77e-8  &  7.57e-13 \\
\bottomrule
  \end{tabular}
  \end{table}

\begin{table}[H]
\caption{Temporal errors provided by SPRK-2 and SPRK-3 at $T=1$ for the  classical ZS with the initial conditions \eqref{eq5.3}.}\label{tabt1}
\setlength{\tabcolsep}{3mm}
\centering
\begin{tabular}{cccccccc} \toprule
 Scheme &  $\varepsilon=0$ & $\tau_0 = 1/4  $ & $\tau_0/2 $  & $\tau_0/2^2 $ & $\tau_0/2^3 $  & $\tau_0/2^4 $    \\
\midrule
                &  $e_{0}(1)$  & 2.57e-05  &  1.34e-06   &  8.03e-08   & 4.98e-09   & 3.11e-10  \\
SPRK-$2$     &  rate    &   -     & 4.27  &  4.06  &  4.01   & 4.00     \\
         &  $n_{0}(1)$   &   4.04e-05   & 2.50e-06   & 1.56e-07  &   9.77e-09  &  6.10e-10   \\
                  &  rate     &   -   & 4.02  & 4.00  &  4.01  &  4.00     \\
\midrule \midrule
               & $e_{0}(1)$   & 8.54e-07 &  1.07e-08  & 1.13e-10  &  1.58e-12  & 2.41e-14   \\
SPRK-$3$   &  rate     &   -   & 6.32 &   6.55 &   6.17  & 6.03    \\
       &  $n_{0}(1)$  &2.10e-07  &  3.43e-09 &  5.47e-11  & 8.62e-13 &  1.49e-14  \\
               &  rate    &   -  &   5.94 & 5.97 &  5.99 &  5.85     \\
\bottomrule
  \end{tabular}
  \end{table}

 Table  \ref{tabs1} reports the spatial errors
of SPRK-2 and SPRK-3 at $T = 1$  with a very small
time step $\tau=1/2^{12}$ such that the discretization error in time is negligible.
We observe that the spatial errors converge exponentially. Then to test the temporal discretization errors of the numerical schemes, we fix the Fourier node 2048
so that spatial errors play no role here. In Table \ref{tabt1}, we lists the temporal errors of SPRK-2 and SPRK-3 for the classical ZS at $T = 1$ with different time steps, respectively, which shows that
SPRK-2 and SPRK-3 are fourth and sixth order accurate in time.  Then, in Figures  \ref{ZSEM2} and  \ref{ZSEM3}, we show  the relative residuals on the  mass and energy calculated  by SPRK-2 and SPRK-3 on the time interval $[0,200]$, respectively, where we choose the Fourier node 1024 and the time step $\tau = 1/20$, respectively. It can be observed that SPRK-2 and SPRK-3 can conserve the mass and Hamiltonian energy of the classical ZS exactly.

To demonstrate the advantages of the proposed schemes, we fix the Fourier node 4096 and  then investigate
the numerical error accumulations and robustness of the proposed schemes in long numerical simulation as the large time step $\tau = 1/20$ is chosen. The results are summarized in Figures \ref{ComErr1} and \ref{ComErr2}. In Figure \ref{ComErr1}, we can observe that compared with the TS-EWI-FP method \cite{ZhangG21}, the errors provided by the SPRK-2 and SPRK-3 are not only much smaller at the same time steps, but also have a good numerical performance in long time simulations. In Figure \ref{ComErr2}, it is clear to see that the computational results provided by the TS-EWI-FP method is unstable and our schemes are more robust.
%
\begin{figure}[H]
\begin{minipage}[t]{0.5\linewidth}
\centering
\includegraphics[height=5.0cm,width=6.5cm]{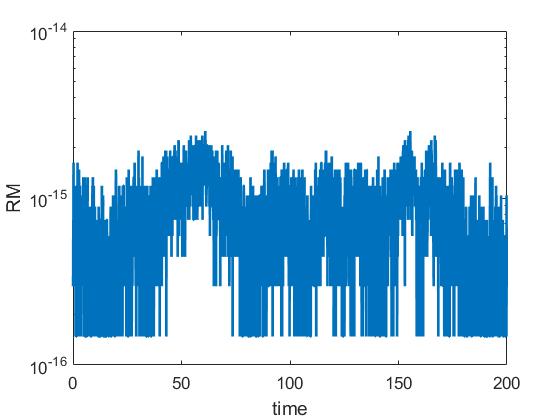}
\end{minipage}%
\begin{minipage}[t]{0.5\linewidth}
\centering
\includegraphics[height=5.0cm,width=6.5cm]{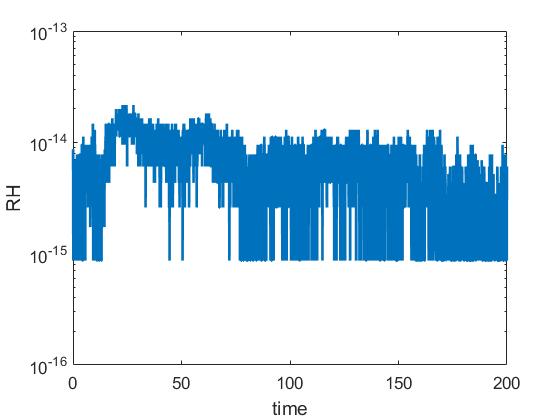}
\end{minipage}
\vspace{-6mm}
\caption{The relative residuals on the mass (left) and energy (right) of SPRK-$2$ for the classical ZS with the initial conditions \eqref{eq5.3}, the Fourier node 1024 and the time step $\tau = 1/20$.}\label{ZSEM2}
\end{figure}


%
\begin{figure}[H]
\begin{minipage}[t]{0.5\linewidth}
\centering
\includegraphics[height=5.0cm,width=6.5cm]{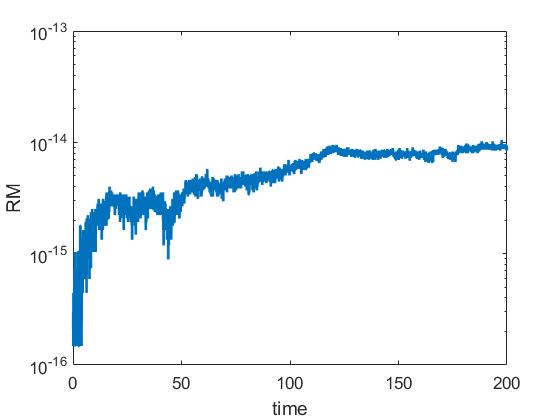}
\end{minipage}%
\begin{minipage}[t]{0.5\linewidth}
\centering
\includegraphics[height=5.0cm,width=6.5cm]{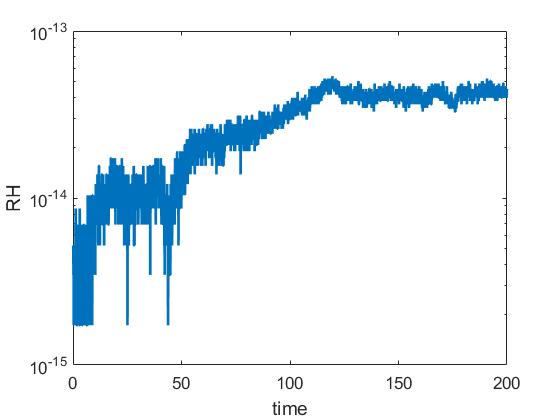}
\end{minipage}
\vspace{-6mm}
\caption{The relative residuals on the mass (left) and  energy (right) of SPRK-3 for the classical ZS with the initial conditions \eqref{eq5.3}, the Fourier node 1024 and the time step $\tau = 1/20$.}\label{ZSEM3}
\end{figure}

\begin{figure}[H]
\subfigure[Errors of $E$  using TS-EWI-FP method]{
\begin{minipage}[t]{0.5\linewidth}
\centering
\includegraphics[height=5.0cm,width=6.5cm]{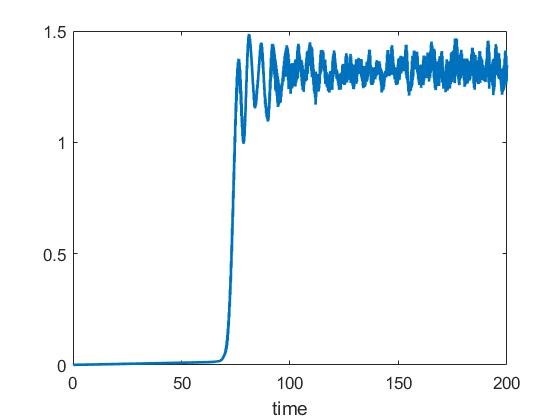}
\end{minipage}%
}%
\subfigure[Errors of $N$ using  TS-EWI-FP method]{
\begin{minipage}[t]{0.5\linewidth}
\centering
\includegraphics[height=5.0cm,width=6.5cm]{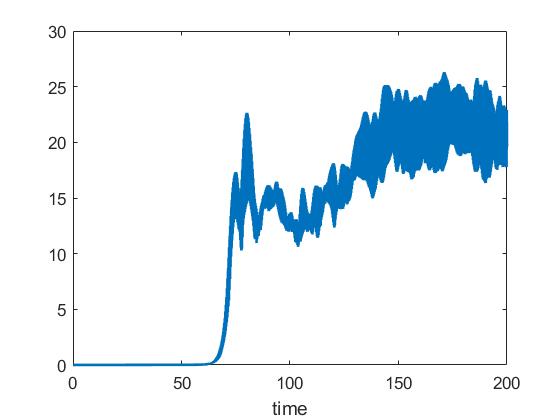}
\end{minipage}%
}%
\end{figure}
 \begin{figure}[H]
\subfigure[Errors of $E$  using SPRK-2]{
\begin{minipage}[t]{0.5\linewidth}
\centering
\includegraphics[height=5.0cm,width=6.5cm]{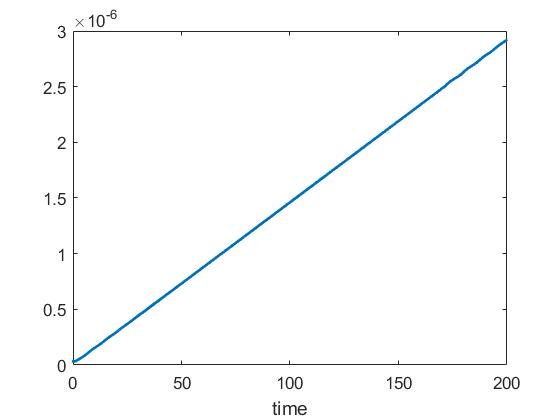}
\end{minipage}
}%
\subfigure[Errors of $N$ using  SPRK-2]{
\begin{minipage}[t]{0.5\linewidth}
\centering
\includegraphics[height=5.0cm,width=6.5cm]{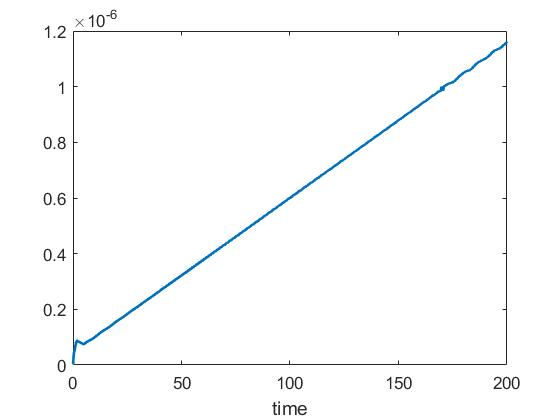}
\end{minipage}
}%
\end{figure}

\begin{figure}[H]
\subfigure[Errors of $E$  using  SPRK-3]{
\begin{minipage}[t]{0.5\linewidth}
\centering
\includegraphics[height=5.0cm,width=6.5cm]{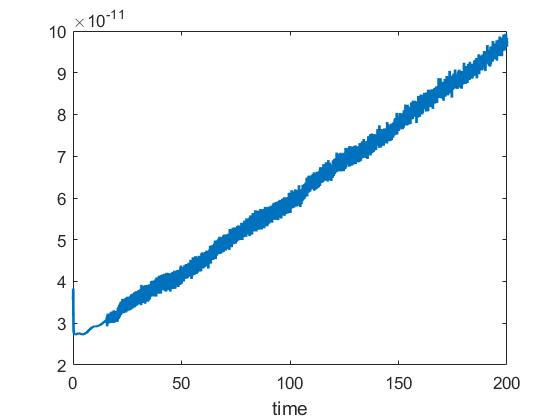}
\end{minipage}
}%
\subfigure[Errors of $N$ using  SPRK-3]{
\begin{minipage}[t]{0.5\linewidth}
\centering
\includegraphics[height=5.0cm,width=6.5cm]{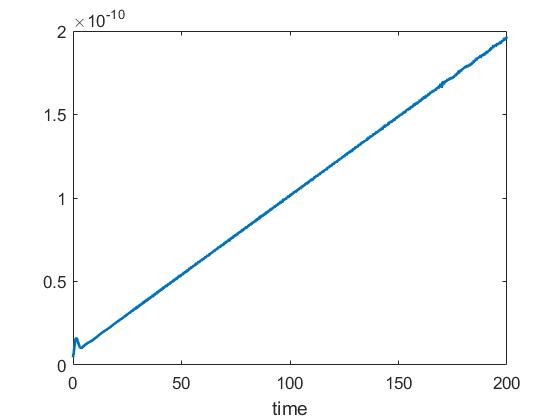}
\end{minipage}
}%
\caption{Time evolution of the numerical errors for the classical ZS with the initial conditions \eqref{eq5.3}, the Fourier node 4096 and the time step $\tau=1/20$.}\label{ComErr1}
\end{figure}


\begin{figure}[H]
\subfigure[]{
\begin{minipage}[t]{0.5\linewidth}
\centering
\includegraphics[height=5.0cm,width=6.5cm]{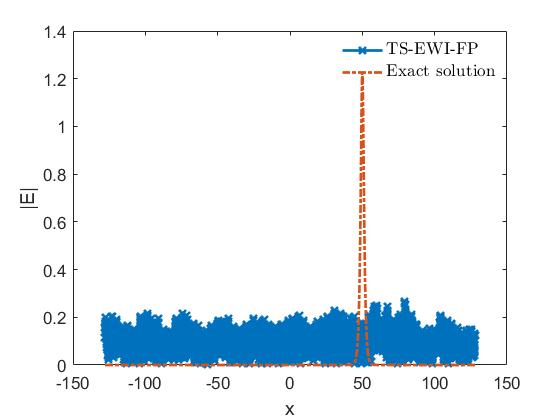}
\end{minipage}%
}%
\subfigure[]{
\begin{minipage}[t]{0.5\linewidth}
\centering
\includegraphics[height=5.0cm,width=6.5cm]{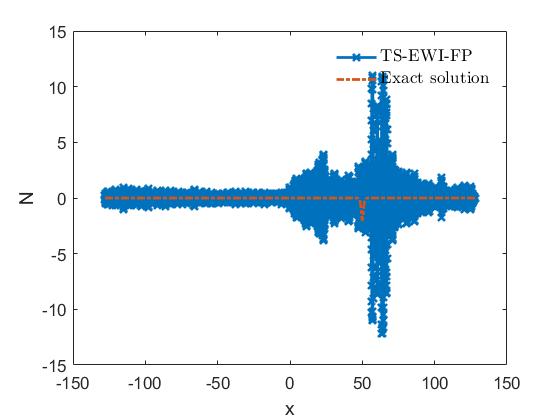}
\end{minipage}%
}%

\subfigure[]{
\begin{minipage}[t]{0.5\linewidth}
\centering
\includegraphics[height=5.0cm,width=6.5cm]{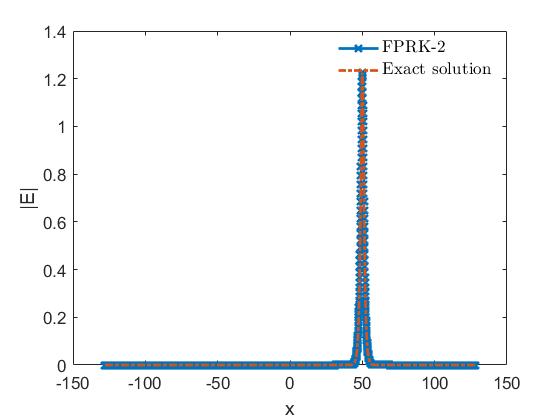}
\end{minipage}
}%
\subfigure[]{
\begin{minipage}[t]{0.5\linewidth}
\centering
\includegraphics[height=5.0cm,width=6.5cm]{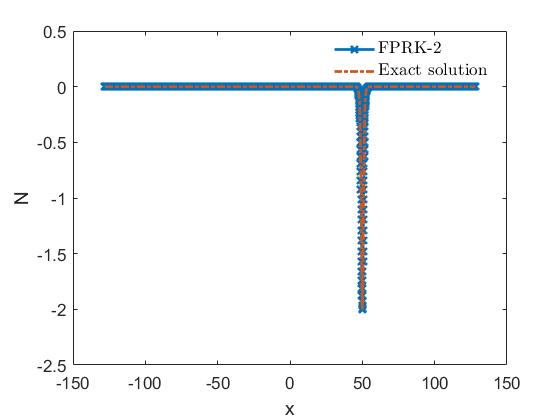}
\end{minipage}
}%
\end{figure}
\begin{figure}[H]
\subfigure[]{
\begin{minipage}[t]{0.5\linewidth}
\centering
\includegraphics[height=5.0cm,width=6.5cm]{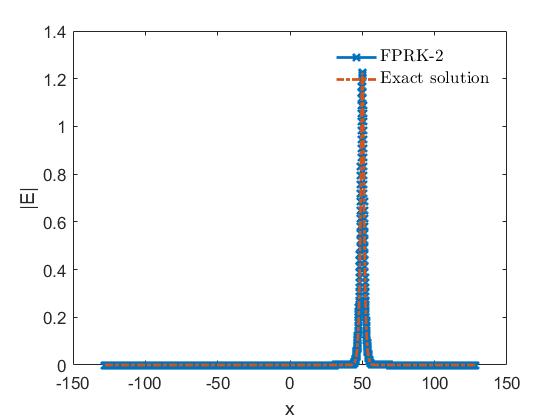}
\end{minipage}
}%
\subfigure[]{
\begin{minipage}[t]{0.5\linewidth}
\centering
\includegraphics[height=5.0cm,width=6.5cm]{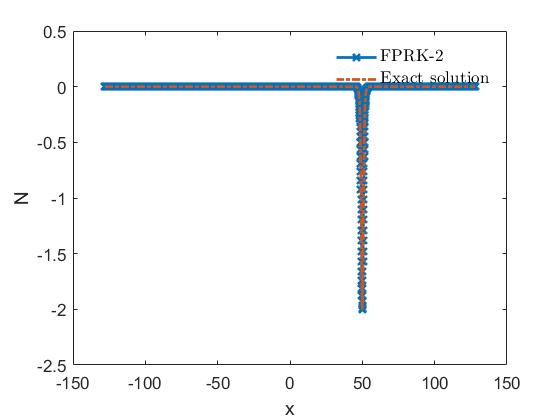}
\end{minipage}
}%
\caption{ The numerical solutions at $T=100$  for the classical ZS with the initial conditions \eqref{eq5.3}, the Fourier node 4096 and the time step $\tau=1/20$.}\label{ComErr2}
\end{figure}

\begin{figure}[H]
\subfigure[SPRK2]{
\begin{minipage}[t]{0.5\linewidth}
\centering
\includegraphics[height=5.0cm,width=6.5cm]{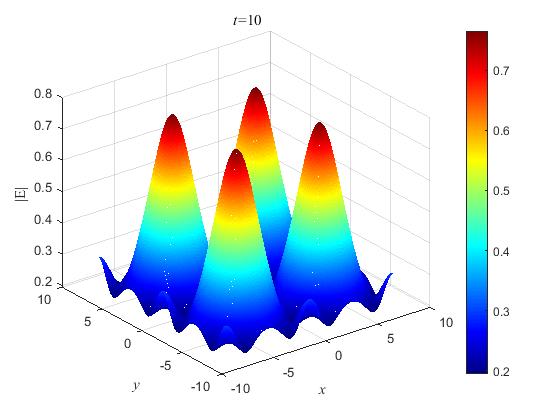}
\end{minipage}%
}%
\subfigure[SPRK2]{
\begin{minipage}[t]{0.5\linewidth}
\centering
\includegraphics[height=5.0cm,width=6.5cm]{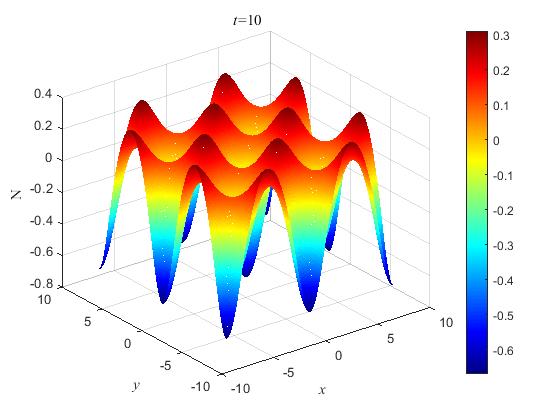}
\end{minipage}%
}%

\subfigure[SPRK3]{
\begin{minipage}[t]{0.5\linewidth}
\centering
\includegraphics[height=5.0cm,width=6.5cm]{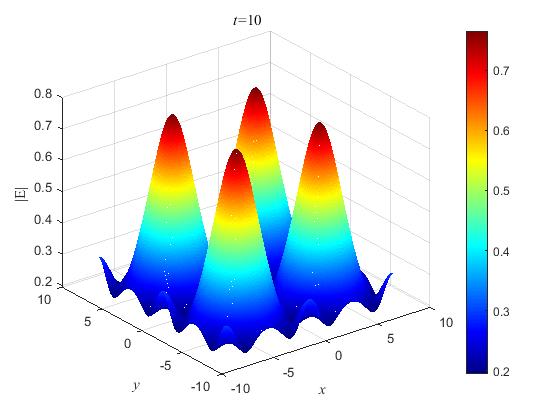}
\end{minipage}
}%
\subfigure[SPRK3]{
\begin{minipage}[t]{0.5\linewidth}
\centering
\includegraphics[height=5.0cm,width=6.5cm]{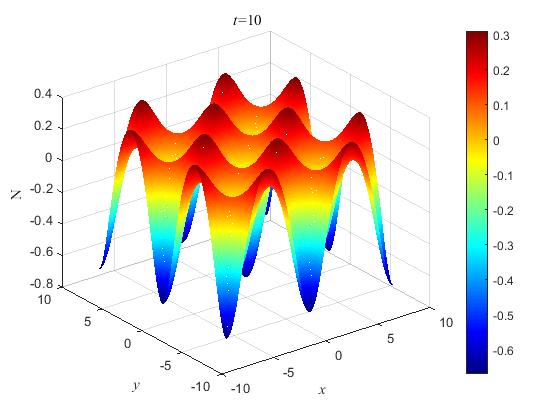}
\end{minipage}
}%
\caption{ The numerical solutions at $T = 10$  for the QZS \eqref{eq1.1} with the initial conditions \eqref{eq5.4}, the parameter $\varepsilon=\frac{1}{2^3}$, the Fourier node $256\times 256 $ and the time step $\tau=1/20$.}\label{2dSPRK}
\end{figure}

\bigskip
{ \bf Example 5.2}.
Consider the two-dimensional QZS  (\ref{eq1.1}) with initial conditions
\begin{equation}
\label{eq5.4}
E_0(x, y)= \cos^2 \frac{ \pi x}{8} \cos^2 \frac{ \pi y}{8} ,\
N_0(x, y)=0,\
N_1(x, y)= 0 ,
\end{equation}
and the periodic boundary condition.

We set the computational domain $\Omega= [-8,8)\times [-8,8)$. Since the exact solution is not known, we take the numerical solution obtained by the proposed SPRK-3
with the Fourier node $256\times 256 $ and the time step $\tau= 10^{-3}$ as the ``reference solution".
Tables \ref{tabt2} and \ref{tabt3} list the temporal errors of SPRK-2 and SPRK-3 at $T = 1$ under different time steps $\tau$ and $\varepsilon$, where the Fourier node is chosen as $256\times 256$. It can be clearly observed that SPRK-2 and SPRK-3 are
really fourth order accurate and sixth order accurate in time,
respectively. Subsequently, the plots for $|E|$ and $N$ at $T=10$ are shown in Figure \ref{2dSPRK}, which implies the numerical solutions obtained by the proposed method are stable in a long time-integration. In Figures  \ref{QZSEM2} and  \ref{QZSEM3}, we display the relative residuals on the mass and energy in time interval $[0,200]$ obtained by SPRK-2 and SPRK-3 with the parameter $\varepsilon=\frac{1}{2^7}$, the Fourier node $256\times 256 $ and the time step $\tau = 1/20$, which implies that the proposed schemes can preserve the mass and Hamiltonian energy of the QZS \eqref{eq1.1} exactly.

To verify the efficiency of the proposed schemes, we also investigated the maximum norm errors and the CPU time
using SPRK-2, SPRK-3 and CNS \cite{CaiYongyong22ANM} at $T=1$ with the parameter $\varepsilon= 1/2^3$ and the Fourier node $256\times 256 $, respectively. The results are summarized in Table  \ref{comparison2}. As illustrated in the Table, we can draw that for a given global error, the proposed schemes spend much less CPU time than CNS, which implies that our schemes are much more efficient. We note that the ``reference solution" is obtained using the proposed SPRK-3 with the parameter $\varepsilon= 1/2^3$, the Fourier node $256\times 256 $ and the time step $\tau= 10^{-3}$, respectively.

Subsequently, by taking the initial conditions \eqref{eq5.4}, the Fourier node $256\times 256 $ and the time step $\tau = 1/100$, we apply SPRK-2 and SPRK-3 to study the convergence rates of the QZS \eqref{eq1.1} to its limiting model, i.e. the classical ZS ($\varepsilon= 0$), respectively.
Figure \ref{conv1} shows the maximum norm errors calculated by using SPRK-2 and SPRK-3 between the QZS \eqref{eq1.1} and the corresponding classical ZS at $ T=10$ with different parameters $\varepsilon = 2^{-(2j+1)}, j=2,3,4,5,6,7$. As illustrated in the figures, the solution  of the QZS \eqref{eq1.1} with the initial conditions \eqref{eq5.4} converges to  the classical ZS quadratically with respect to $\varepsilon $, which is consistent with the existing theoretical result presented in \cite{FangYungFu19b}.


\begin{table}[H]
\caption{Temporal errors provided by SPRK-2 at $T=1$ for the QZS \eqref{eq1.1} with \eqref{eq5.4} and different $\varepsilon$.}\label{tabt2}
\setlength{\tabcolsep}{3mm}
\centering
\begin{tabular}{cccccccc} \toprule
$e_{\varepsilon}(1)$& $\tau_0 = 1/4 $   & $\tau_0/2 $ & $\tau_0/2^2 $  & $\tau_0  /2^3 $ & $\tau_0  /2^4 $   \\
\midrule
$\varepsilon =\frac{1}{2^2}$   & 7.05e-5  & 4.99e-6   &   3.32e-7    &  2.11e-8    &  1.32e-9    \\
  rate      & -   & 3.82 &3.91  & 3.98  &    4.00   \\
\midrule
$\varepsilon =\frac{1}{2^3}$   & 6.03e-5  & 4.50e-6    &  2.93e-7    &  1.85e-8    &  1.16e-9    \\
                      rate      & -  & 3.75 & 3.94  & 3.99  &    4.00    \\
\midrule
$\varepsilon =\frac{1}{2^4}$   & 4.67e-5   &   3.45e-6    &  2.24e-7   &   1.41e-8    &  8.83e-10   \\
                  rate      & -   & 3.76 & 3.95  & 3.99  &    4.00  \\
\midrule
$\varepsilon =\frac{1}{2^{5}}$   & 4.30e-5 &  3.16e-6 &  2.04e-7  & 1.29e-8 &  8.07e-10  \\
                  rate      & -  & 3.77 &   3.95  & 3.99  &    4.00   \\
\midrule
$\varepsilon =\frac{1}{2^6}$   &4.20e-5  & 3.08e-6  & 2.00e-7  & 1.26e-8  & 7.88e-10  \\
                  rate      & -  & 3.77 &3.95  & 3.99  &   4.00     \\
\midrule
$\varepsilon =\frac{1}{2^{7}}$  & 1.46e-5 &  3.07e-6 &  1.98e-7  & 1.25e-8   &7.83e-10   \\
                  rate      & -    & 3.77 &3.95  &    3.99   &    4.00     \\
\midrule \midrule
$n_{\varepsilon}(1)$ & $\tau_0 = 1/4 $   & $\tau_0/2 $ & $\tau_0/2^2 $  & $\tau_0  /2^3 $ & $\tau_0  /2^4 $  \\
\midrule
$\varepsilon =\frac{1}{2^2}$     & 2.30e-5   & 1.51e-6    &  9.59e-8   &   6.00e-9   &   3.75e-10    \\
   rate      & -    &  3.92 & 3.98  &    4.00  &    4.00   \\
\midrule
$\varepsilon =\frac{1}{2^3}$   & 9.79e-6  &  6.20e-7   &   3.89e-8   &   2.43e-9   &   1.52e-10    \\
                   rate      & -   &  3.98 &  4.00    &    4.00     &   4.00       \\
\midrule
$\varepsilon =\frac{1}{2^4}$   &  9.33e-6  &  6.00e-07   &  3.79e-8  &  2.37e-9 &   1.49e-10   \\
                  rate      & -   &  3.96 & 3.99    &    4.00     &   4.00     \\
\midrule
$\varepsilon =\frac{1}{2^{5}}$  &1.01e-5 &  6.57e-7 &  4.15e-8 &  2.60e-9  & 1.63e-10  \\
                  rate      & -   &  3.95 &  3.98    &    4.00     &   4.00      \\
\midrule
$\varepsilon =\frac{1}{2^6}$  &1.03e-5  & 6.71e-7 &  4.24e-8 &  2.66e-9  & 1.66e-10   \\
                  rate      & -    &  3.95 &  3.98     &   4.00    &   4.00       \\
\midrule
$\varepsilon =\frac{1}{2^{7}}$  &1.04e-5 &  6.74e-7 &  4.27e-8 &  2.68e-9 &  1.67e-10   \\
                  rate      & -   &  3.95 &  3.98    &   4.00    &    4.00         \\
\bottomrule
  \end{tabular}
\end{table}


\begin{table}[H]
\caption{Temporal errors provided by SPRK-3 at $T=1$ for the QZS \eqref{eq1.1} with \eqref{eq5.4} and different $\varepsilon$.}\label{tabt3}
\setlength{\tabcolsep}{3mm}
\centering
\begin{tabular}{cccccccc} \toprule
$e_{\varepsilon}(1)$   & $\tau_0 = 1/4 $ & $\tau_0/2 $  & $\tau_0  /2^2 $ & $\tau_0  /2^3 $   \\
\midrule
$\varepsilon =\frac{1}{2^2}$  &2.95e-6  &  6.41e-8   & 1.10e-9   & 1.76e-11    \\
  rate      & -   & 5.53    &   5.87    &   5.97     \\
\midrule
$\varepsilon =\frac{1}{2^3}$   &1.63e-6   & 2.95e-8   & 4.78e-10  &  7.54e-12    \\
                      rate      & -   &  5.79     &  5.95    &  5.99   \\
\midrule
$\varepsilon =\frac{1}{2^4}$  &1.19e-6  &  2.13e-8   & 3.44e-10   & 5.42e-12    \\
                  rate      & -   & 5.80  &    5.95   &   5.99       \\
\midrule
$\varepsilon =\frac{1}{2^{5}}$  & 1.08e-6  &  1.91e-8  &  3.08e-10   & 4.86e-12  \\
                  rate      & -   &  5.82  &   5.95    &  5.99     \\
\midrule
$\varepsilon =\frac{1}{2^6}$  & 1.05e-6  &  1.86e-8   & 3.00e-10  &  4.72e-12   \\
                  rate      & -   &5.82    &  5.96   &  5.99      \\
\midrule
$\varepsilon =\frac{1}{2^{7}}$  &1.04e-6  &  1.85e-8  &  2.98e-10   & 4.69e-12   \\
                  rate      & -   &  5.82    &  5.96   &   5.99     \\
\midrule \midrule
$n_{\varepsilon}(1)$   & $\tau_0 =1/4 $  & $\tau_0/2 $  & $\tau_0  /2^2 $ & $\tau_0  /2^3 $  \\
\midrule
$\varepsilon =\frac{1}{2^2}$     &2.70e-7  &  5.10e-9   & 8.41e-11  &  1.33e-12     \\
   rate      & -   & 5.73   &  5.92   &    5.98   \\
\midrule
$\varepsilon =\frac{1}{2^3}$  &6.89e-8  &  1.18e-9  &  1.89e-11  &  3.01e-13  \\
                   rate      & -   &  5.86  &   5.96    &  5.99      \\
\midrule
$\varepsilon =\frac{1}{2^4}$  &1.24e-7 &   2.08e-9   & 3.31e-11   &  5.33e-13   \\
                  rate      & -   & 5.90 &   5.97  &   5.98   \\
\midrule
$\varepsilon =\frac{1}{2^{5}}$  &1.56e-7  &  2.60e-9  &  4.14e-11  &  6.91e-13  \\
                  rate      & -   &   5.90 &   5.97  &   5.99     \\
\midrule
$\varepsilon =\frac{1}{2^6}$  &1.63e-7  &  2.72e-9   & 4.32e-11   &  8.00e-13    \\
                  rate      & -   &    5.90   &  5.98  &   5.98    \\
\midrule
$\varepsilon =\frac{1}{2^{7}}$  &1.65e-7   & 2.75e-9   & 4.37e-11  &  7.29e-13     \\
                  rate      & -   &  5.90   &  5.98  &   5.98   \\
\bottomrule
  \end{tabular}
\end{table}

\begin{figure}[H]
\begin{minipage}[t]{0.5\linewidth}
\centering
\includegraphics[height=5.0cm,width=6.5cm]{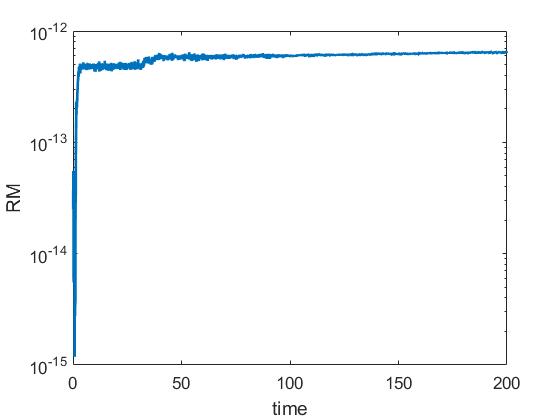}
\end{minipage}%
\begin{minipage}[t]{0.5\linewidth}
\centering
\includegraphics[height=5.0cm,width=6.5cm]{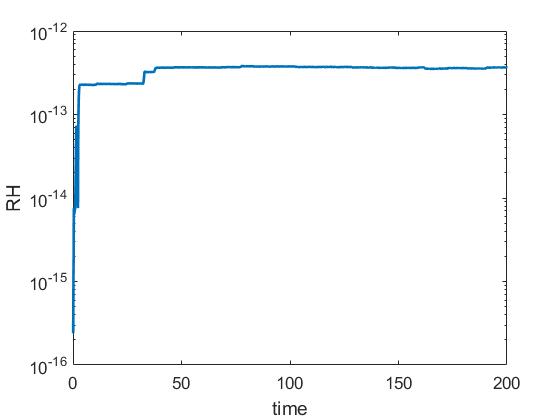}
\end{minipage}
\vspace{-6mm}
\caption{The relative residuals on the mass (left) and energy (right) of SPRK-2 for the QZS \eqref{eq1.1} with the initial conditions \eqref{eq5.4}, the parameter $\varepsilon=\frac{1}{2^7}$, the Fourier node $256\times 256 $ and the time step $\tau = 1/20$.}\label{QZSEM2}
\end{figure}

%
\begin{figure}[H]
\begin{minipage}[t]{0.5\linewidth}
\centering
\includegraphics[height=5.0cm,width=6.5cm]{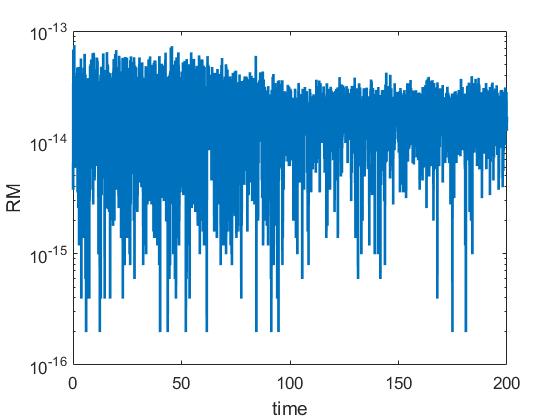}
\end{minipage}%
\begin{minipage}[t]{0.5\linewidth}
\centering
\includegraphics[height=5.0cm,width=6.5cm]{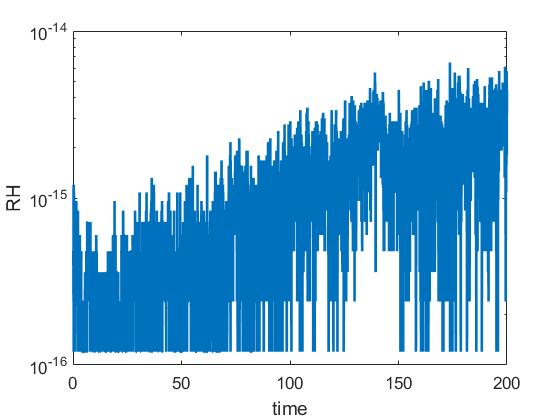}
\end{minipage}
\vspace{-6mm}
\caption{The relative residuals on the mass (left) and energy (right) of SPRK-3 for the QZS \eqref{eq1.1} with the initial conditions \eqref{eq5.4}, the parameter $\varepsilon=\frac{1}{2^7}$, the Fourier node $256\times 256 $ and the time step $\tau = 1/20$.}\label{QZSEM3}
\end{figure}

\begin{table}[H]
\caption{Numerical errors and computational CPU times using three numerical
schemes solving the QZS \eqref{eq1.1} with the initial conditions \eqref{eq5.4} at $T=1$, the parameter $\varepsilon= 1/2^3$ and the Fourier node $256\times 256 $.}\label{comparison2}
\setlength{\tabcolsep}{3mm}
\centering
 \begin{tabular}{ccccccc} \toprule
             Scheme      &$\tau$      & $e_{\varepsilon}(1)$ & $n_{\varepsilon}(1)$ & CPU time (s)     \\[1ex]
\midrule
             CNS \cite{CaiYongyong22ANM}         & $1.0\times10^{-5}$&7.18e-10    &  1.22e-09 &   224.19&\\[1ex]
\midrule
             SPRK-2      & $1.0\times10^{-2}$& 1.94e-10    &   2.55e-11 &   5.27&\\[1ex]
\midrule
             SPRK-3     & $5.0\times10^{-2}$  & 1.26e-10    &   4.99e-12 &   2.68&\\[1ex]

\bottomrule
  \end{tabular}
\end{table}

%
\begin{figure}[H]
\begin{minipage}[t]{0.5\linewidth}
\centering
\includegraphics[height=5.0cm,width=6.5cm]{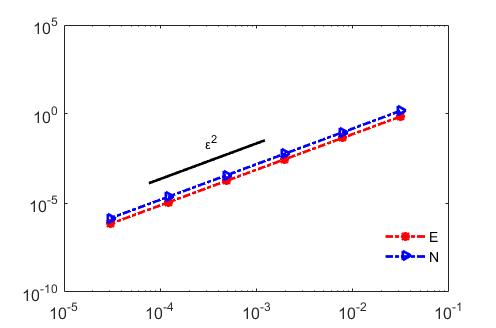}
\end{minipage}%
\begin{minipage}[t]{0.5\linewidth}
\centering
\includegraphics[height=5.0cm,width=6.5cm]{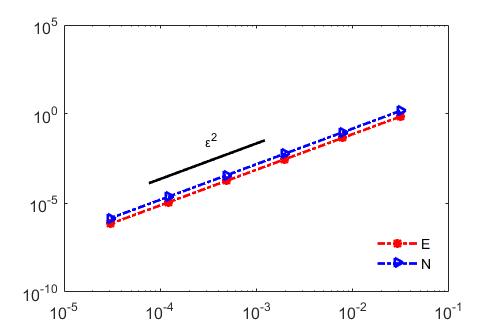}
\end{minipage}
\vspace{-6mm}
\caption{Convergence rates of SPRK-2 (left) and SPRK-3 (right) between the QZS \eqref{eq1.1} and the classical ZS at $T=10$, respectively, where we choose the initial conditions \eqref{eq5.4}, the Fourier node $256\times 256 $ and the time step $\tau = 1/100$, respectively.}\label{conv1}
\end{figure}

\subsection{Dynamic simulation of one dimensional QZS}

{ \bf Example 5.3}. In this subsection, we choose the initial conditions as (cf. \cite{ZhangG21,ZhangSu21})
\begin{equation}
\label{ex3}
\begin{split}
&E_0(x)={\rm i} \sum\limits_{j=1}^2\sqrt{2\left(1-V_j^{2}\right)} \operatorname{sech}\left(x-x_{j}\right) e^{{\rm i}V_j\left(x-x_j\right)/2},\\
&N_0(x)=-2 \sum\limits_{j=1}^2 \operatorname{sech}^{2}\left(x-x_{j}\right),\\
&N_1(x)=-4\sum\limits_{j=1}^2 V_j \operatorname{sech}^{2}\left(x-x_{j}\right)
\operatorname{tanh}\left(x-x_{j}\right),\ x\in\Omega,
\end{split}
\end{equation}
where $x_1$ and $x_2$ are initial locations of the two solitary waves, and
$V_1$ and $V_2$ are the propagation velocity and the sign  means moving  to the right or left.
For brevity, all computations are performed by using SPRK-2 with the Fourier node 4000 and the time step $\tau=1/20$, where the computational domain is chosen as $\Omega=
[-200, 200)$. Moreover, we take the following parameters as:
 \begin{itemize}
\item  Case I: $x_1=-x_2=-30,\ V_1=-V_2=\frac{1}{2}$;
\item Case II: $x_1=-x_2=-30,\ V_1=\frac{3}{4},\ V_2=-\frac{1}{2}$;
\item  Case III: $x_1=-x_2=-5,\ V_1=\frac{3}{4},\ V_2=-\frac{1}{2}$.
 \end{itemize}
\begin{figure}[H]
\centering

\subfigure[$\varepsilon =0$]{
\begin{minipage}[t]{0.5\linewidth}
\centering
\includegraphics[height=5cm,width=6.5cm]{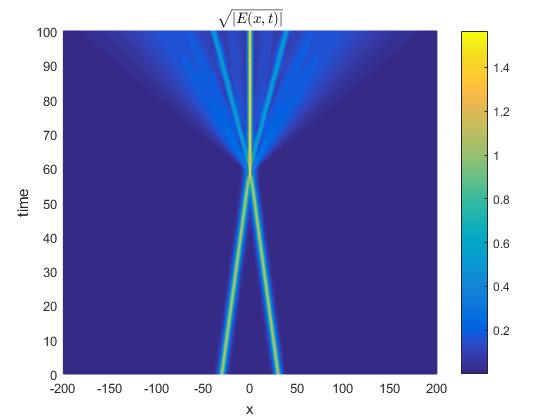}
\end{minipage}%
}%
\subfigure[$\varepsilon =0$]{
\begin{minipage}[t]{0.5\linewidth}
\centering
\includegraphics[height=5cm,width=6.5cm]{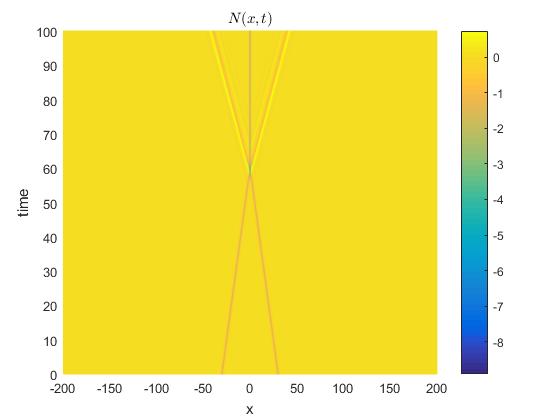}
\end{minipage}%
}%
 \\
\subfigure[$\varepsilon =\frac{1}{2^2}$]{
\begin{minipage}[t]{0.5\linewidth}
\centering
\includegraphics[height=5cm,width=6.5cm]{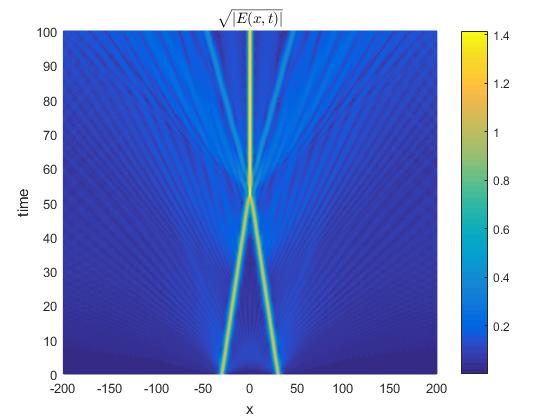}
\end{minipage}
}%
\subfigure[$\varepsilon =\frac{1}{2^2}$]{
\begin{minipage}[t]{0.5\linewidth}
\centering
\includegraphics[height=5cm,width=6.5cm]{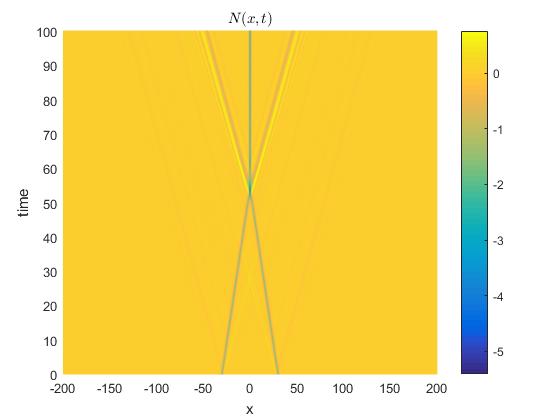}
\end{minipage}
}%
\\
\subfigure[$\varepsilon =1 $]{
\begin{minipage}[t]{0.5\linewidth}
\centering
\includegraphics[height=5cm,width=6.5cm]{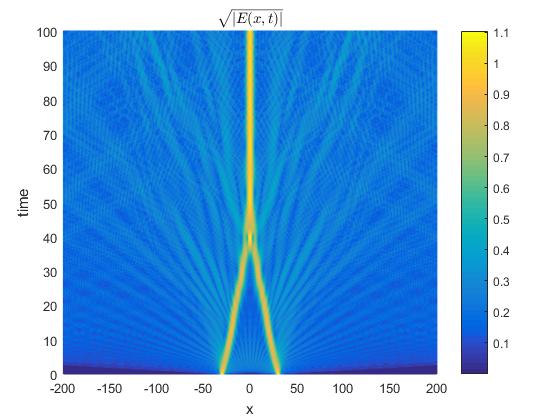}
\end{minipage}
}%
\subfigure[$\varepsilon =1 $]{
\begin{minipage}[t]{0.5\linewidth}
\centering
\includegraphics[height=5cm,width=6.5cm]{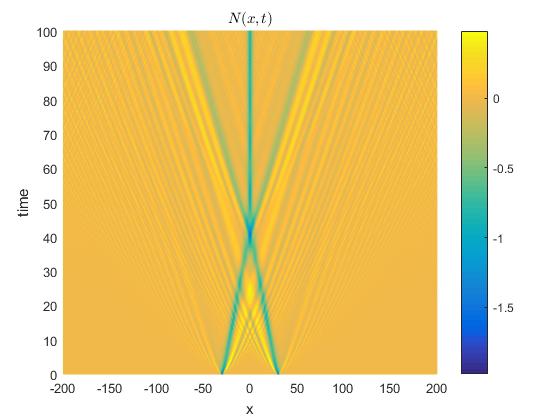}
\end{minipage}
}%
\centering
\caption{Inelastic collision between two solitons for the QZS \eqref{eq1.1} with the initial conditions \eqref{ex3} under case I.}\label{interac1}
\end{figure}

The surface plots of the interaction of two solitary waves for QZS ($\varepsilon \neq 0$) \eqref{eq1.1}
and the classical ZS ($\varepsilon=0$)  under cases (I)-(III)  are
demonstrated in Figures \ref{interac1}-\ref{interac3},
respectively, which implies that: (i) all the collisions between two solitons are not elastic; (ii) when two initially well-separated solitons with opposite  propagation velocities (cf. case (I) in Figure \ref{interac1})
or different propagation speeds  (cf. case (II) in Figure \ref{interac2}), they  collide and fuse into a new soliton with the strengthened amplitude and the narrower width; (iii) the amplitude-weakened solitons with propagation speeds changed and some small radiation are generated during the collision; (iv) when the  initial locations are not initially well-separated (cf. case (III) in Figure \ref{interac3}), the dynamics is considerably  more complicated, it seems that a periodic perturbation on the position of some localized pulse. In addition, from Figures \ref{interac1}-\ref{interac3}, we also observe that the soliton-soliton collisions of the QZS are more unstable
than the corresponding classical ZS after collision, and the quantum effect makes the chaos much more obvious. The larger the quantum effect is, the more obvious the spatiotemporal chaos is. In particular, for the  strong quantum regime  $\varepsilon=1$,  there are small outgoing waves emitting before colliding,
and the chaos is much more obvious than the classical one. In Figures \ref{CONinterac2} and \ref{CONinterac1} , we display the relative residuals on mass and energy provided by SPRK-2, which shows that the proposed scheme can conserve the discrete mass and energy exactly.
\begin{figure}[H]
\centering

\subfigure[$\varepsilon =0$]{
\begin{minipage}[t]{0.5\linewidth}
\centering
\includegraphics[height=5cm,width=6.5cm]{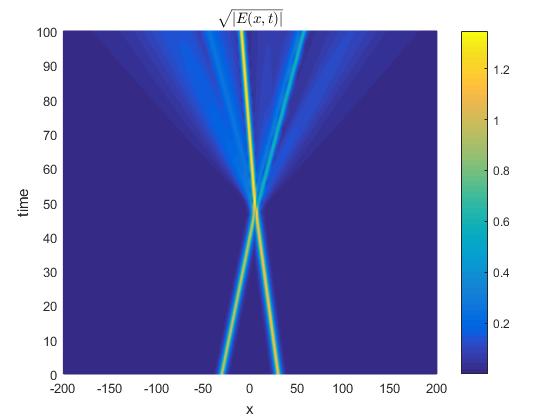}
\end{minipage}%
}%
\subfigure[$\varepsilon =0$]{
\begin{minipage}[t]{0.5\linewidth}
\centering
\includegraphics[height=5cm,width=6.5cm]{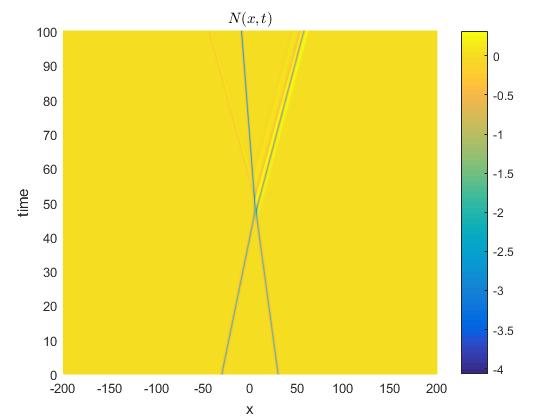}
\end{minipage}%
}%
 \\
\subfigure[$\varepsilon =\frac{1}{2^2}$]{
\begin{minipage}[t]{0.5\linewidth}
\centering
\includegraphics[height=5cm,width=6.5cm]{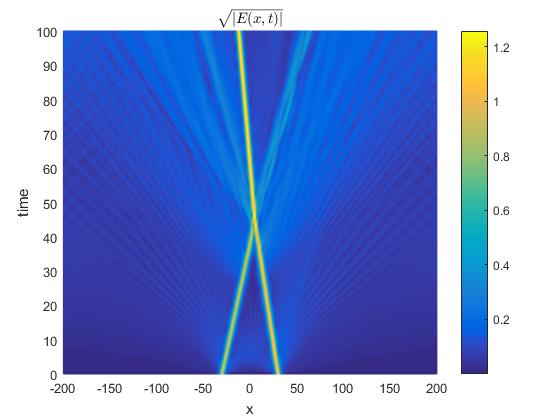}
\end{minipage}
}%
\subfigure[$\varepsilon =\frac{1}{2^2}$]{
\begin{minipage}[t]{0.5\linewidth}
\centering
\includegraphics[height=5cm,width=6.5cm]{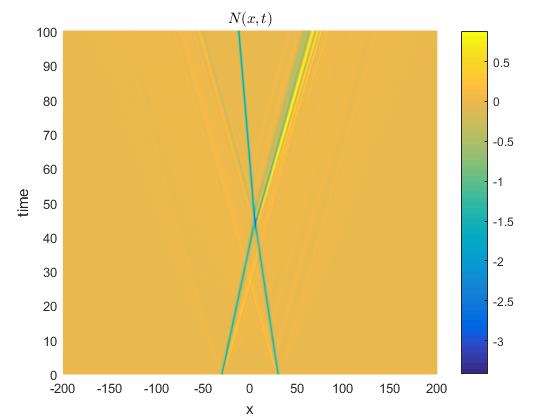}
\end{minipage}
}%
\\
\subfigure[$\varepsilon =1 $]{
\begin{minipage}[t]{0.5\linewidth}
\centering
\includegraphics[height=5cm,width=6.5cm]{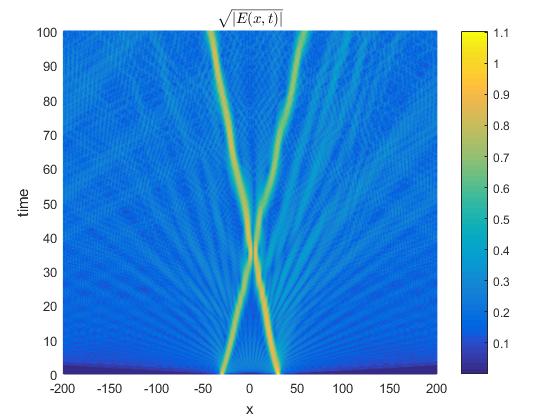}
\end{minipage}
}%
\subfigure[$\varepsilon =1 $]{
\begin{minipage}[t]{0.5\linewidth}
\centering
\includegraphics[height=5cm,width=6.5cm]{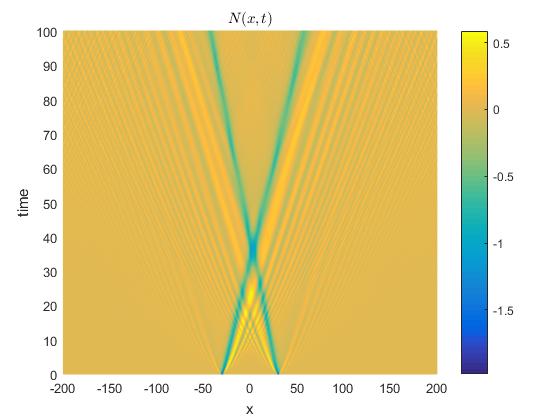}
\end{minipage}
}%
\centering
\caption{Inelastic collision between two solitons for the QZS \eqref{eq1.1} with the initial conditions \eqref{ex3} under case II.}\label{interac2}
\end{figure}
\begin{figure}[H]
\centering

\subfigure[$\varepsilon =0$]{
\begin{minipage}[t]{0.5\linewidth}
\centering
\includegraphics[height=5cm,width=6.5cm]{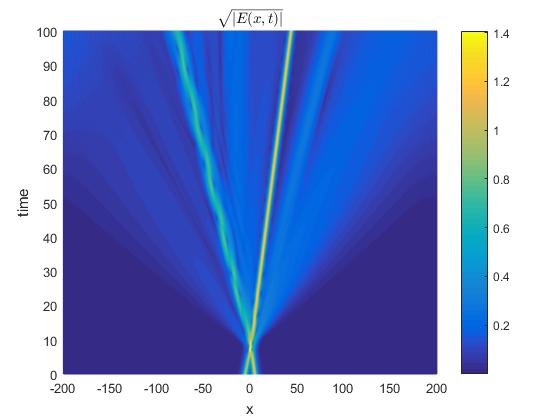}
\end{minipage}%
}%
\subfigure[$\varepsilon =0$]{
\begin{minipage}[t]{0.5\linewidth}
\centering
\includegraphics[height=5cm,width=6.5cm]{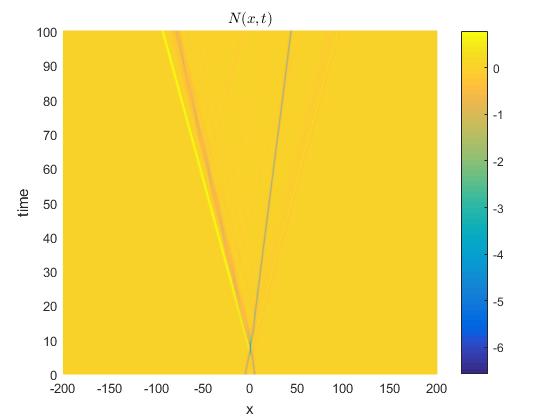}
\end{minipage}%
}%
 \\
\subfigure[$\varepsilon =\frac{1}{2^2}$]{
\begin{minipage}[t]{0.5\linewidth}
\centering
\includegraphics[height=5cm,width=6.5cm]{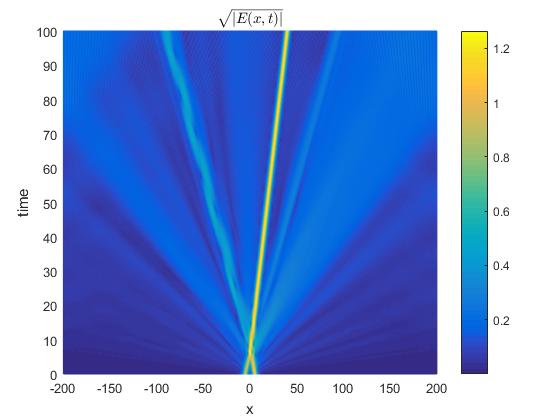}
\end{minipage}
}%
\subfigure[$\varepsilon =\frac{1}{2^2}$]{
\begin{minipage}[t]{0.5\linewidth}
\centering
\includegraphics[height=5cm,width=6.5cm]{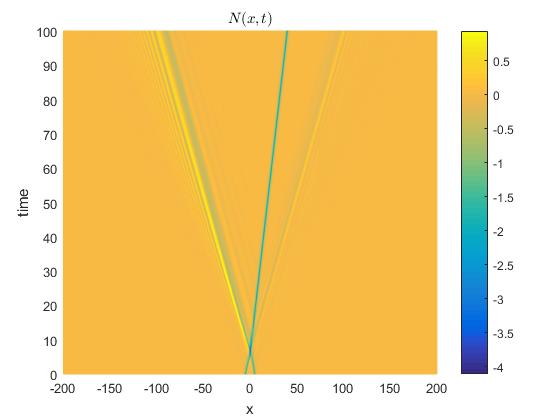}
\end{minipage}
}%
\\
\subfigure[$\varepsilon =1 $]{
\begin{minipage}[t]{0.5\linewidth}
\centering
\includegraphics[height=5cm,width=6.5cm]{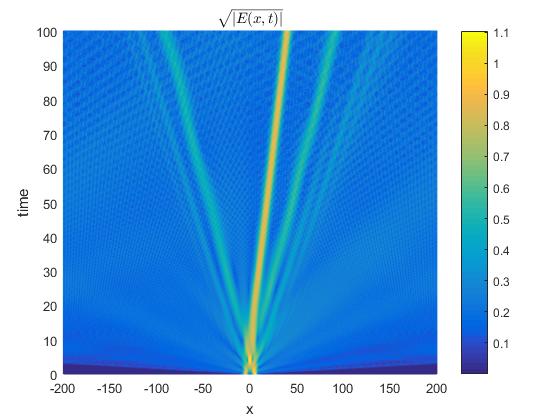}
\end{minipage}
}%
\subfigure[$\varepsilon =1 $]{
\begin{minipage}[t]{0.5\linewidth}
\centering
\includegraphics[height=5cm,width=6.5cm]{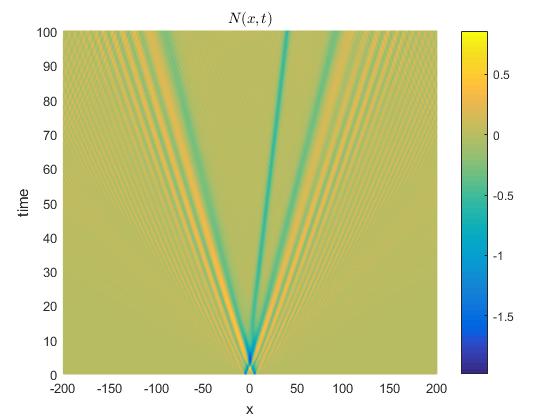}
\end{minipage}
}%

\centering
\caption{Inelastic collision between two solitons  for the QZS \eqref{eq1.1} with the initial conditions \eqref{ex3} under case III.}\label{interac3}
\end{figure}
\begin{figure}[H]
\begin{minipage}[t]{0.33\linewidth}
\centering
\includegraphics[height=5cm,width=5cm]{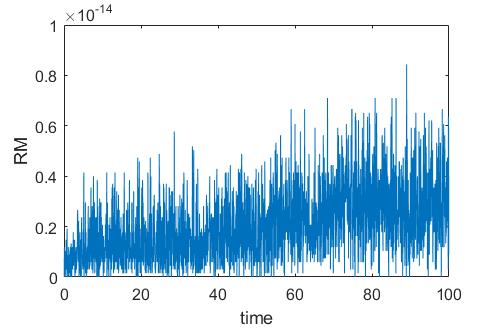}
\end{minipage}%
\begin{minipage}[t]{0.33\linewidth}
\centering
\includegraphics[height=5cm,width=5cm]{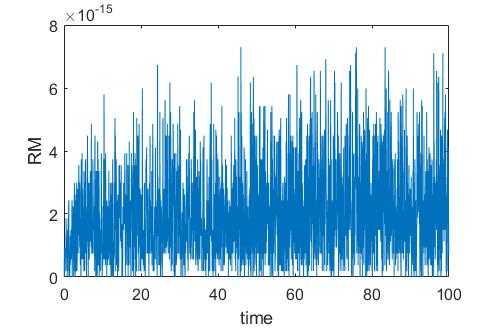}
\end{minipage}
\begin{minipage}[t]{0.33\linewidth}
\centering
\includegraphics[height=5cm,width=5cm]{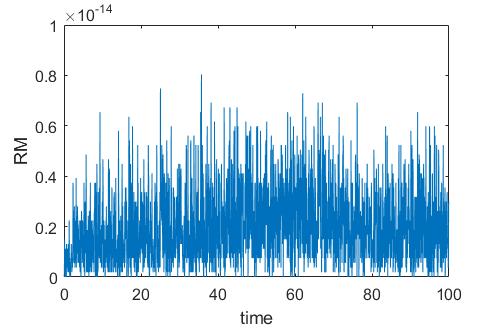}
\end{minipage}%
\vspace{-3mm}
\caption{The relative residuals on the  mass  of SPRK-2 for the QZS \eqref{eq1.1} with the initial conditions \eqref{ex3} under case I, II and III (from  left to right), respectively with the parameter $\varepsilon =\frac{1}{2^2}$, the Fourier node 4000 and the time step $\tau=1/20$.}\label{CONinterac2}
\end{figure}

\begin{figure}[H]
\begin{minipage}[t]{0.33\linewidth}
\centering
\includegraphics[height=5cm,width=5cm]{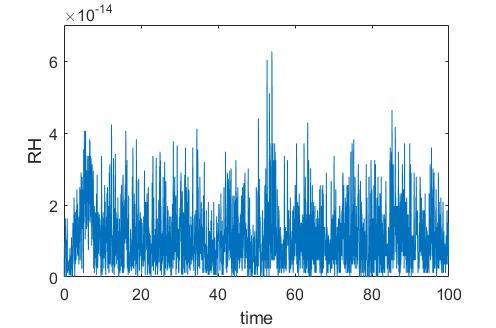}
\end{minipage}%
\begin{minipage}[t]{0.33\linewidth}
\centering
\includegraphics[height=5cm,width=5cm]{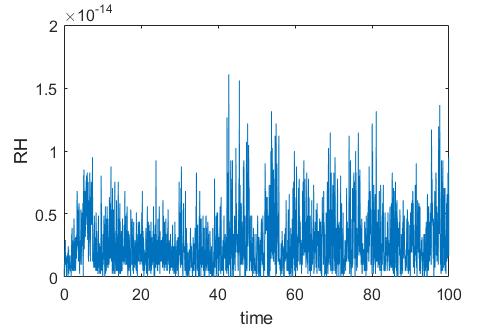}
\end{minipage}
\begin{minipage}[t]{0.33\linewidth}
\centering
\includegraphics[height=5cm,width=5cm]{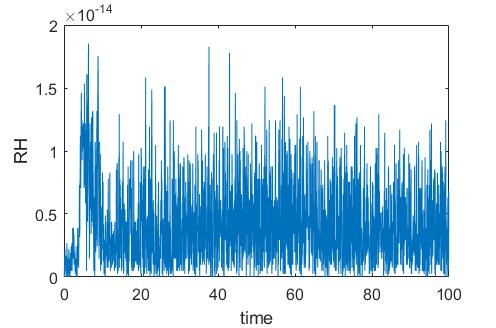}
\end{minipage}%
\vspace{-3mm}
\caption{The relative residuals on the energy  of SPRK-2 for the QZS \eqref{eq1.1} with the initial conditions \eqref{ex3} under case I, II and III (from  left to right), respectively, with the parameter $\varepsilon =\frac{1}{2^2}$, the Fourier node 4000 and the time step $\tau=1/20$.}\label{CONinterac1}
\end{figure}
{ \bf Example 5.4}. The initial  conditions are chosen as (cf. \cite{Misra09})
\begin{equation}
\label{ex4}
\begin{split}
&E_0(x)= E_0  (1 + \beta \cos(kx)) ,\\
&N_0(x)=- \sqrt{2} E_0 k \beta    \cos(kx),\quad
N_1(x)= 0 ,
\end{split}
\end{equation}
where $E_0 = (k / \sqrt{2}) (1 + \varepsilon^2 k^2) $ is the
amplitude of the pump Langmuir wave,  $0 < k < \sqrt{2} E_0 $
and $ \beta $ represents a relatively  small constant to emphasize that the perturbation is  small.

We take the computational domain $\Omega=
[-100, 100)$ and set the parameters as $k = 0.7$, $\beta = 0.001$. For brevity, all computations are performed by using SPRK-2 over the time interval $[0,200]$ with the Fourier node 2000 and the time step $\tau = 1/20$.
Figure \ref{patt1} shows that many solitary patterns can be generated and excited through
the modulational instability of unstable harmonic modes.
It can be clearly observed that the QZS \eqref{eq1.1} is more unstable than  the corresponding  classical ZS.
In particular, for the strong quantum regime $\varepsilon=1$,
numerical simulation also indicates that
the motion of the trains leads to more collision among the neighboring coherent solitary patterns,
 and fuse into the newer pattern with strengthened amplitude.
This space-time evolution reveals that  the spatiotemporal chaos is more obvious as the classical one.
The numerical results are in good agreement with the results given in \cite{Misra09}.
Figure \ref{CONpatt} display the relative residuals on the  mass and energy provided by SPRK-2 with the parameter $\varepsilon =0$, which implies that SPRK-2  preserve the mass and energy conservations  exactly.

\section{Conclusions}

In this paper,  we  develop a novel class of high-order accurate structure-preserving methods for solving the QZS \eqref{eq1.1}, which is based on the idea of the QAV approach, the symplectic RK method together with the standard Fourier pseudo-spectral method in space. We show that the proposed scheme can preserve the discrete mass and original Hamiltonian energy exactly. In addition, an efficient fixed point iteration is presented to solve the resulting nonlinear equations of the proposed scheme. Numerical experiments for the QZS \eqref{eq1.1} are carried out to illustrate the capability and accuracy
of the new schemes. We also use our new scheme to numerically simulate the soliton-soliton interaction and the pattern dynamics of the QZS \eqref{eq1.1} in 1D. We numerically observe that the numerical solution  of the QZS \eqref{eq1.1} converges to the classical Zakharov system quadratically in the semi-classical limit.
As far as we know, there are some works (e.g., see Refs. \cite{CaiYongyong22ANM,ZhangSu21}) on optimal error estimates of second-order mass- and energy-conserving
schemes for the QZS \eqref{eq1.1}, but the error estimate of high-order ones is still not available. Thus, how to establish optimal error
estimates for the proposed methods will be an interesting topic for future studies.


\begin{figure}[H]
\subfigure[$\varepsilon =0$]{
\begin{minipage}[t]{0.5\linewidth}
\centering
\includegraphics[height=5cm,width=6.0cm]{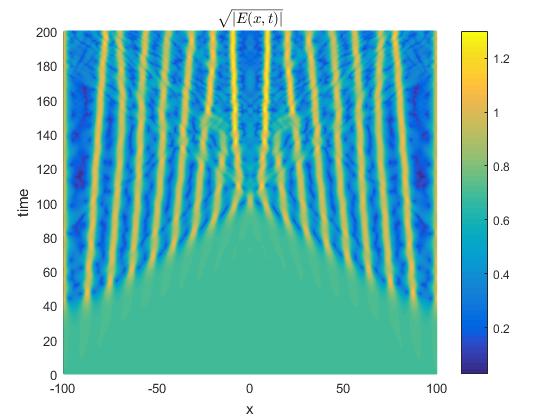}
\end{minipage}%
}%
\subfigure[$\varepsilon =1/2^4$]{
\begin{minipage}[t]{0.5\linewidth}
\centering
\includegraphics[height=5cm,width=6.0cm]{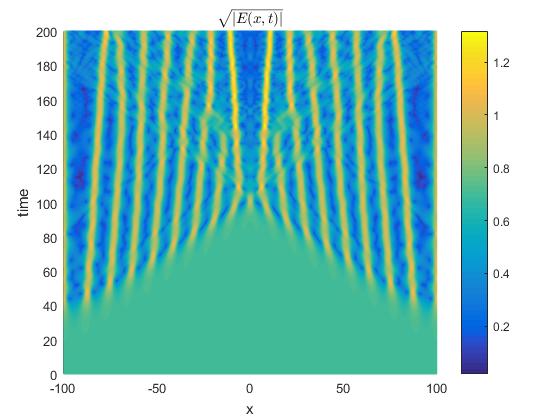}
\end{minipage}%
}%
 \\
\subfigure[$\varepsilon =1/2^2$]{
\begin{minipage}[t]{0.5\linewidth}
\centering
\includegraphics[height=5cm,width=6.0cm]{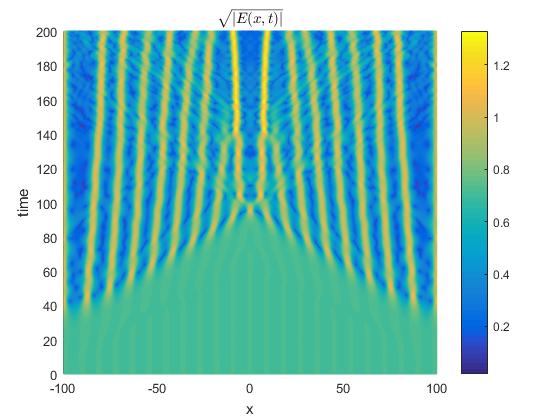}
\end{minipage}
}%
\subfigure[$\varepsilon =1$]{
\begin{minipage}[t]{0.5\linewidth}
\centering
\includegraphics[height=5cm,width=6.0cm]{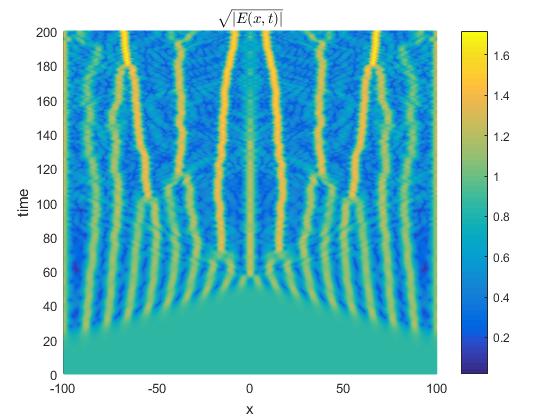}
\end{minipage}
}%
\caption{The contours of $|\sqrt{E(x,t)}|$ for pattern dynamics of SPRK-2 for the QZS \eqref{eq1.1} with the initial conditions \eqref{ex4}.}\label{patt1}
\end{figure}

%
\begin{figure}[H]
\begin{minipage}[t]{0.5\linewidth}
\centering
\includegraphics[height=5cm,width=6.0cm]{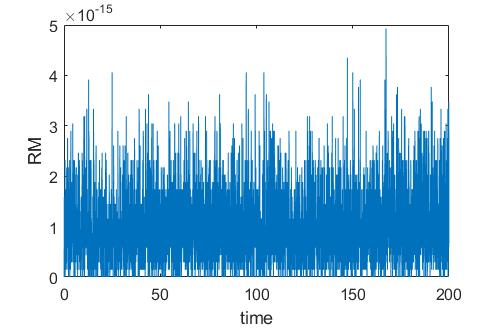}
\end{minipage}%
\begin{minipage}[t]{0.5\linewidth}
\centering
\includegraphics[height=5cm,width=6.0cm]{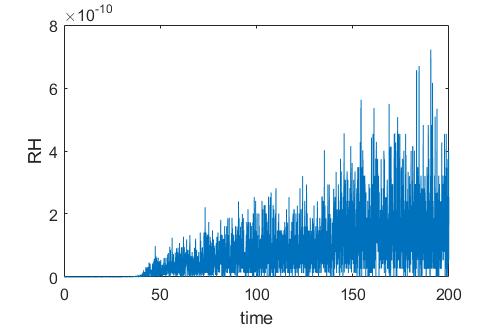}
\end{minipage}
\vspace{-6mm}
\caption{The relative residuals in mass (left) and energy (right) of SPRK-2 for the QZS \eqref{eq1.1}  with the initial conditions \eqref{ex4},  the parameter $\varepsilon =0$, the Fourier node 2000 and the time step $\tau = 1/20$.}\label{CONpatt}
\end{figure}

\section*{Acknowledgments}

The work is supported by the National Natural Science Foundation of China (Grant Nos. 12261097, 12261103), and the Yunnan Fundamental Research Projects (Grant Nos. 202101AT070208, 202301AT070117, 202101AS070044) and Innovation team of School of Mathematics and Statistics, Yunnan University (No.
ST20210104). The first author is in particular grateful to Prof. Weizhu Bao for fruitful discussions.

\section*{Conflict of interest} The authors declare that they have no conflict of interest.




\end{document}